\newtheorem{prelem}{{\bf Proposition}}
 \newtheorem{theorem}{Theorem}
\newtheorem{corollary}[theorem]{Corollary}
\newtheorem{observation}[theorem]{Observation}
\theoremstyle{definition}
\theoremstyle{remark}
\title{The Roman $(k,k)$-domatic number of a graph}
\author {
$^1$A.P. Kazemi,  $^{2}$S.M. Sheikholeslami and $^3$L. Volkmann \vspace{4mm}\\
$^1$Department of Mathematics\\
University of Mohaghegh Ardabili\\
P. O. Box 179, Ardabil, Iran\\
{\tt adelpkazemi@yahoo.com}\vspace{3mm} \\
$^{2}$Department of Mathematics \\
Azarbaijan University of Tarbiat Moallem
\\ Tabriz, I.R. Iran\\
{\tt s.m.sheikholeslami@azaruniv.edu}\vspace{3mm} \\
$^{3}$Lehrstuhl II f\"{u}r Mathematik\\
RWTH Aachen University\\
52056 Aachen, Germany\\
{\tt volkm@math2.rwth-aachen.de}\vspace{4mm} \\
}
\date{}
\begin{document}
\maketitle

\begin{abstract}
Let $k$ be a positive integer. A {\em Roman $k$-dominating function}
on a graph $G$ is a labeling $f:V (G)\longrightarrow \{0, 1, 2\}$
such that every vertex with label 0 has at least $k$ neighbors with
label 2. A set $\{f_1,f_2,\ldots,f_d\}$ of distinct Roman
$k$-dominating functions on $G$ with the property that
$\sum_{i=1}^df_i(v)\le 2k$ for each $v\in V(G)$, is called a {\em
Roman $(k,k)$-dominating family} (of functions) on $G$. The maximum
number of functions in a Roman $(k,k)$-dominating family on $G$ is
the {\em Roman $(k,k)$-domatic number} of $G$, denoted by
$d_{R}^k(G)$. Note that the Roman $(1,1)$-domatic number
$d_{R}^1(G)$ is the usual Roman domatic number $d_{R}(G)$. In this
paper we  initiate the study of the Roman $(k,k)$-domatic number in
graphs and we  present sharp bounds for $d_{R}^k(G)$. In addition,
we determine the Roman $(k,k)$-domatic number of some graphs. Some
of our results extend those given by Sheikholeslami and Volkmann in
2010 for the Roman domatic number. \vspace{4mm}

\noindent{\bf Keywords:} Roman domination number, Roman domatic
number, Roman $k$-domination number, Roman $(k,k)$-domatic number.
\\ {\bf MSC 2000}: 05C69
\end{abstract}
\section{Introduction}
In this paper, $G$ is a simple graph with vertex set $V=V(G)$ and edge set $%
E=E(G)$. The order $|V|$ of $G$ is denoted by $n=n(G)$. For every vertex $%
v\in V$, the \emph{open neighborhood} $N(v)$ is the set $\{u\in V(G)\mid
uv\in E(G)\}$ and the \emph{closed neighborhood} of $v$ is the set $N[v] =
N(v) \cup \{v\}$. The \emph{degree} of a vertex $v\in V(G)$ is $%
\deg_G(v)=\deg(v)=|N(v)|$. The \emph{minimum} and \emph{maximum degree} of a
graph $G$ are denoted by $\delta=\delta(G)$ and $\Delta=\Delta(G)$,
respectively. The \emph{open neighborhood} of a set $S\subseteq V$ is the
set $N(S)=\cup_{v\in S}N(v)$, and the \emph{closed neighborhood} of $S$ is
the set $N[S]=N(S)\cup S$. The complement of a graph $G$ is denoted by $%
\overline{G}$. We write $K_n$ for the \emph{complete graph} of order $n$ and
$C_n$ for a \emph{cycle} of length $n$. Consult \cite{hhs, W} for the
notation and terminology which are not defined here.

Let $k$ be a positive integer. A subset $S$ of vertices of $G$ is a \emph{$k$%
-dominating set~} if $|N_{G}(v)\cap S|\geq k$ for every $v\in V(G)-S$. The
\emph{$k$-domination number} $\gamma _{k}(G)$ is the minimum cardinality of
a $k$-dominating set of $G$. A \emph{$k$-domatic partition }is a partition
of $V$ into $k$-dominating sets, and the \emph{$k$-domatic number} $d_{k}(G)$
is the largest number of sets in a $k$-domatic partition. The $k$-domatic
number was introduced by Zelinka \cite{Z}. Further results on the $k$%
-domatic number can be found in the paper \cite{KV2} by K$\ddot{\mathrm{a}}$%
mmerling and Volkmann.

Let $k\geq 1$ be an integer. Following K$\ddot{\mathrm{a}}$mmerling and
Volkmann \cite{KV}, a \emph{Roman $k$-dominating function} (briefly RkDF) on
a graph $G$ is a labeling $f:V(G)\rightarrow \{0,1,2\}$ such that every
vertex with label 0 has at least $k$ neighbors with label 2. The $\emph{weight%
}$ of a Roman $k$-dominating function is the value $f(V(G))=\sum_{v\in
V(G)}f(u)$. The minimum weight of a Roman $k$-dominating function on a graph
$G$ is called the \emph{Roman $k$-domination number}, denoted by $\gamma
_{kR}(G)$. Note that the Roman $1$-domination number $\gamma _{1R}(G)$ is
the usual Roman domination number $\gamma _{R}(G)$. A $\gamma _{kR}(G)$-%
\emph{function} is a Roman $k$-dominating function of $G$ with weight $%
\gamma _{kR}(G)$. A Roman $k$-dominating function $f:V\rightarrow \{0,1,2\}$
can be represented by the ordered partition $(V_{0},V_{1},V_{2})$ (or $%
(V_{0}^{f},V_{1}^{f},V_{2}^{f})$ to refer to $f$) of $V$, where $%
V_{i}=\{v\in V\mid f(v)=i\}$. In this representation, its weight is $\omega
(f)=|V_{1}|+2|V_{2}|$. Since $V_{1}^{f}\cup V_{2}^{f}$ is a $k$-dominating
set when $f$ is an RkDF, and since placing weight 2 at the vertices of a $k$%
-dominating set yields an RkDF, in \cite{KV}, it was observed that
\begin{equation}
\gamma _{k}(G)\leq \gamma _{kR}(G)\leq 2\gamma _{k}(G).  \label{eqq}
\end{equation}

A set $\{f_{1},f_{2},\ldots ,f_{d}\}$ of distinct Roman $k$-dominating
functions on $G$ with the property that $\sum_{i=1}^{d}f_{i}(v)\leq 2k$ for
each $v\in V(G)$ is called a \emph{Roman $(k,k)$-dominating family} (of
functions) on $G$. The maximum number of functions in a Roman $(k,k)$%
-dominating family (briefly R$(k,k)$D family) on $G$ is the \emph{Roman $%
(k,k)$-domatic number} of $G$, denoted by $d_{R}^{k}(G)$. The Roman $(k,k)$%
-domatic number is well-defined and
\begin{equation}
d_{R}^{k}(G)\geq 1  \label{eq0}
\end{equation}%
for all graphs $G$ since the set consisting of any RkDF forms an R$(k,k)$D
family on $G$ and if $k\geq 2$, then
\begin{equation}
d_{R}^{k}(G)\geq 2  \label{eq00}
\end{equation}%
since the functions $f_{i}:V(G)\rightarrow \{0,1,2\}$ defined by $f_{i}(v)=i$
for each $v\in V(G)$ and $i=1,2$ forms an R$(k,k)$D family on $G$ of order 2.

The definition of the Roman dominating function was given implicitly by
Stewart \cite{s} and ReVelle and Rosing \cite{rr}. Cockayne, Dreyer Jr.,
Hedetniemi and Hedetniemi \cite{cdhh} as well as Chambers, Kinnersley,
Prince and West \cite{west} have given a lot of results on Roman domination.

Our purpose in this paper is to initiate the study of Roman $(k,k)$-domatic
number in graphs. We first study basic properties and bounds for the Roman $%
(k,k)$-domatic number of a graph. In addition, we determine the Roman $(k,k)$%
-domatic number of some classes of graphs.\newline

The next known results are useful for our investigations.

\begin{prelem}
\label{V0} \emph{\textbf{(K\"{a}mmerling, Volkmann \cite{KV} 2009)} Let $k\ge 1$ be an
integer, and let $G$ be a graph of order $n$.  If $n\le 2k$, then $\gamma_{kR}(G)=n$.
If $n\ge 2k+1$, then $\gamma_{kR}(G)\ge 2k$.}
\end{prelem}


\begin{prelem}
\label{V1} \emph{\textbf{(K\"{a}mmerling, Volkmann \cite{KV} 2009)} Let $G$
be a graph of order $n$. Then $\gamma_{kR}(G)<n$ if and only if $G$ contains
a bipartite subgraph $H$ with bipartition $X,Y$ such that $|X|>|Y|\ge k$ and
$\deg_H(v)\ge k$ for each $v\in X$.}
\end{prelem}

\begin{prelem}
\label{Delta} \emph{\textbf{(K\"{a}mmerling, Volkmann \cite{KV} 2009)} If $G$
is a graph of order $n$ and maximum degree $\Delta\ge k$, then
\begin{equation*}
\gamma_{kR}(G)\ge\left\lceil\frac{2n}{\frac{\Delta}{k}+1}\right\rceil.
\end{equation*}
}
\end{prelem}

\begin{prelem}
\label{SV} \emph{\textbf{(Sheikholeslami, Volkmann \cite{SV} 2010)}
If $G$ is a graph, then $d_{R}(G)=1$ if and only if $G$ is empty.}
\end{prelem}

\begin{prelem}
\label{1d=n} \emph{\textbf{(Sheikholeslami, Volkmann \cite{SV} 2010)} If $G$
is a graph of order $n\ge 2$, then $d_R(G)=n$ if and only if $G$ is the
complete graph on $n$ vertices.}
\end{prelem}

\begin{prelem}
\label{Knk=1} \emph{\textbf{(Sheikholeslami, Volkmann \cite{SV} 2010)} Let $%
K_n$ be the complete graph of order $n\ge 1$. Then $d_{R}^1(K_n)=1$.}
\end{prelem}

\begin{prelem}
\label{pq} \emph{\textbf{(Sheikholeslami, Volkmann \cite{SV2})} Let $K_{p,q}$ be the complete
bipartite graph of order $p+q$ such that $q\ge p\ge 1$. Then
$\gamma_{kR}(K_{p,q})=p+q$ when $p<k$ or $q=p=k$,
$\gamma_{kR}(K_{p,q})=k+p$ when $p+q\ge 2k+1$ and $k\le p\le 3k$ and
$\gamma_{kR}(K_{p,q})=4k$ when $p\ge 3k$.}
\end{prelem}

We start with the following observations and properties. The first observation
is an immediate consequence of (\ref{eq00}) and Proposition \ref{SV}.

\begin{observation}
\emph{If $G$ is a graph, then $d_{R}^k(G)=1$ if and only if $k=1$ and $G$ is
empty.}
\end{observation}

\begin{observation}
\emph{If $G$ is a graph and $k\ge 2$ is an integer, then $d_{R}^k(G)=2$ if
and only if $G$ is trivial.}
\end{observation}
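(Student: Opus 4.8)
The plan is to prove both implications, noting that the inequality $d_R^k(G)\ge 2$ is already guaranteed by (\ref{eq00}) for $k\ge 2$; hence the real content is to decide exactly when a \emph{third} function can be adjoined. So the "only if" direction I would recast as its contrapositive: if $G$ is non-trivial, i.e. $n(G)\ge 2$, then $d_R^k(G)\ge 3$. The "if" direction is then a short enumeration on the single-vertex graph.

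For the "if" direction, suppose $G$ is trivial with $V(G)=\{v\}$. First I would observe that, since $v$ has no neighbor and $k\ge 2$, the label $0$ is forbidden at $v$: a vertex of label $0$ must have at least $k\ge 1$ neighbors of label $2$. Thus the only R$k$DFs on $G$ are the two functions $f(v)=1$ and $f(v)=2$, so any R$(k,k)$D family has at most two members and $d_R^k(G)\le 2$. These two functions do form a family, because $1+2=3\le 2k$ for $k\ge 2$, so in fact $d_R^k(G)=2$ (which also follows from (\ref{eq00})).

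For the "only if" direction, assume $n(G)\ge 2$ and exhibit an R$(k,k)$D family of size three. Fix two distinct vertices $u,w$ and define three functions taking only the values $1$ and $2$: let $f_1\equiv 1$; let $f_2$ take the value $2$ at $u$ and $1$ elsewhere; and let $f_3$ take the value $2$ at $w$ and $1$ elsewhere. Since $u\ne w$, these are pairwise distinct, and since none of them ever uses the label $0$, each is vacuously an R$k$DF. For the family condition I would check the column sums: $\sum_{i=1}^{3} f_i(v)=3$ for every $v\notin\{u,w\}$, while at $u$ and at $w$ the sum is $1+2+1=4$; as $k\ge 2$ gives $2k\ge 4$, we get $\sum_{i=1}^{3} f_i(v)\le 2k$ everywhere. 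Hence $\{f_1,f_2,f_3\}$ is an R$(k,k)$D family and $d_R^k(G)\ge 3$, contradicting $d_R^k(G)=2$.

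The delicate case, and the reason the construction is chosen as above rather than the "obvious" one, is the extremal value $k=2$, where the budget $2k=4$ is as tight as possible. The naive attempt to extend the standard pair $f_1\equiv 1$, $f_2\equiv 2$ by a third R$k$DF fails precisely here, since the constraint $\sum_i f_i(v)\le 2k$ would force the third function to satisfy $f_3(v)\le 2k-3=1$ at every vertex, making it identically $1$ and hence equal to $f_1$. The construction above circumvents this obstacle by placing the two weight-$2$ labels at \emph{different} vertices $u$ and $w$, so that no vertex ever carries total weight exceeding $4$; this single family then works uniformly for all $k\ge 2$.
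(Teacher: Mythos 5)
Your proof is correct and takes essentially the same route as the paper's: for nontrivial $G$, exhibit three pairwise distinct functions with values in $\{1,2\}$ (hence vacuously Roman $k$-dominating functions) whose column sums are at most $4\le 2k$; the paper's trio ($f$ equal to $1$ at a fixed vertex $v$ and $2$ elsewhere, $g$ equal to $2$ at $v$ and $1$ elsewhere, $h\equiv 1$) differs from your three functions only in the particular placement of the labels $2$. Your explicit enumeration in the one-vertex case also correctly fills in a detail the paper dismisses as ``obviously.''
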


\begin{proof}
If $G$ is trivial, then obviously $d_{R}^k(G)=2$. Now let $G$ be nontrivial
and let $v\in V(G)$. Define $f,g,h:V(G)\rightarrow \{0,1,2\}$ by
\begin{equation*}
f(v)=1\; \mathrm{and}\; f(x)=2\;\mathrm{if}\; x\in V(G)-\{v\},
\end{equation*}
\begin{equation*}
g(v)=2\; \mathrm{and}\; g(x)=1\;\mathrm{if}\; x\in V(G)-\{v\},
\end{equation*}
and
\begin{equation*}
h(x)=1 \;\mathrm{if}\; x\in V(G).
\end{equation*}
It is clear that $\{f,g,h\}$ is an R$(k,k)$D family of $G$ and hence $d_{R}^k(G)\ge 3$.
This completes the proof.
\end{proof}

\begin{observation}\label{Ob:3}
\label{obs} \emph{If $G$ is a graph and $k\ge \Delta(G)+1$ is an integer,
then $d_{R}^k(G)\le2k-1$.}
\end{observation}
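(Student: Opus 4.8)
The plan is to exploit the fact that the hypothesis $k\ge\Delta(G)+1$ makes the label $0$ unavailable for any vertex. First I would note that for every $v\in V(G)$ we have $\deg(v)\le\Delta(G)\le k-1<k$, so no vertex possesses $k$ neighbors at all, let alone $k$ neighbors carrying label $2$. Consequently, if $f$ is any Roman $k$-dominating function on $G$ and some vertex $v$ had $f(v)=0$, the defining condition would require $v$ to have at least $k$ neighbors with label $2$, which is impossible. Hence every RkDF $f$ satisfies $f(v)\in\{1,2\}$, and in particular $f(v)\ge 1$, for each $v\in V(G)$.

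Now let $\{f_1,f_2,\ldots,f_d\}$ be an arbitrary R$(k,k)$D family on $G$ with $d=d_R^k(G)$, and fix any vertex $v$. The family condition gives $\sum_{i=1}^{d}f_i(v)\le 2k$, while the previous paragraph gives $f_i(v)\ge 1$ for every $i$, so
\begin{equation*}
d=\sum_{i=1}^{d}1\le\sum_{i=1}^{d}f_i(v)\le 2k,
\end{equation*}
which already yields $d\le 2k$. It remains to rule out equality, and this is where distinctness of the functions enters.

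To finish, suppose for contradiction that $d=2k$. Then for every vertex $v$ the chain of inequalities above collapses: $\sum_{i=1}^{2k}f_i(v)=2k$ with each summand $\ge 1$ forces $f_i(v)=1$ for all $i$ and all $v$. Thus every $f_i$ is the constant function equal to $1$ on $V(G)$, so the functions $f_1,\ldots,f_{2k}$ are all identical, contradicting the requirement that the members of an R$(k,k)$D family be distinct. Therefore $d\le 2k-1$, as claimed. The only genuinely substantive step is the opening observation that the degree hypothesis eliminates label $0$; once that is in place the bound is a short counting argument, and the boundary case is disposed of purely by the distinctness clause in the definition of an R$(k,k)$D family.
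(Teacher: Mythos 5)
Your proof is correct and rests on the same two ingredients as the paper's: the hypothesis $k\ge\Delta(G)+1$ forbids the label $0$ in any RkDF, and distinctness of the family rules out the extremal configuration. The only difference is organizational — the paper locates a vertex $v$ with $f_i(v)=2$ (which exists by the same distinctness argument you use) and concludes $d-1\le 2k-2$ directly, whereas you first prove $d\le 2k$ and then eliminate equality; in fact your write-up makes explicit a step the paper leaves implicit.
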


\begin{proof}
If $d_{R}^k(G)=1$, then the statement is trivial. Let $d_{R}^k(G)\ge
2$. Since $k\geq \Delta(G)+1$, we have $\gamma_{kR}(G)=n$. Let
$\{f_1, f_2,\ldots,f_d\}$ be an R$(k,k)$D family on $G$ such that $d= d_{R}^k(G)$.
Since $f_1,f_2,\ldots,f_d$ are distinct, we may assume
$f_i(v)=2$ for some $i$
and some $v\in V(G)$. It follows from $\sum_{j=1}^df_j(v)\le 2k$ that $%
\sum_{j\neq i}f_j(v)\le 2k-2$. Thus $d-1\le 2k-2$ as desired.
\end{proof}

\begin{observation}
\label{obs2}\emph{If $k\ge 2$ is an integer, and $G$ is a graph of order $n\ge 2k-2$, then
$d_R^k(G)\ge 2k-1$.}
\end{observation}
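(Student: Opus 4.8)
The plan is to exhibit an explicit Roman $(k,k)$-dominating family of size $2k-1$. The guiding observation is that any labeling $f\colon V(G)\to\{1,2\}$ that never uses the value $0$ is automatically a Roman $k$-dominating function, since the defining condition only constrains vertices labeled $0$ and there are none. Thus I only need to produce $2k-1$ distinct $\{1,2\}$-valued functions whose pointwise sum never exceeds $2k$.

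Next I would analyze what the budget $\sum_{i=1}^{2k-1} f_i(v)\le 2k$ forces at a single vertex $v$. If $a$ of the functions assign $2$ to $v$ and the remaining $2k-1-a$ assign $1$, then the sum equals $a+(2k-1)$, which is at most $2k$ exactly when $a\le 1$. So the constraint is equivalent to requiring that, at every vertex, at most one of the $2k-1$ functions takes the value $2$. Identifying each function $f_i$ with the set $S_i=\{v: f_i(v)=2\}$, this says the sets $S_1,\dots,S_{2k-1}$ must be pairwise disjoint; distinctness of the functions then amounts to distinctness of these sets, and pairwise-disjoint sets can coincide only if both are empty.

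The construction is now essentially forced. Using $n\ge 2k-2$, I would fix $2k-2$ distinct vertices $v_1,\dots,v_{2k-2}$ and set $S_i=\{v_i\}$ for $1\le i\le 2k-2$, together with $S_{2k-1}=\emptyset$; equivalently, let $f_i(v_i)=2$ and $f_i(v)=1$ for $v\ne v_i$ when $i\le 2k-2$, and let $f_{2k-1}\equiv 1$. These $2k-1$ functions are pairwise distinct (the singleton supports are distinct, and $f_{2k-1}$ uses no $2$), each is Roman $k$-dominating by the opening remark, and the pointwise sum equals $2k$ at each $v_j$ and $2k-1$ at every other vertex, hence never exceeds $2k$. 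This yields a Roman $(k,k)$-dominating family of size $2k-1$, so $d_R^k(G)\ge 2k-1$.

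Because the whole argument reduces to a one-line counting bound together with an explicit family, I do not expect a serious obstacle; the only point that genuinely invokes a hypothesis is the need for $2k-2$ distinct vertices to keep the singleton-support functions distinct, which is exactly where $n\ge 2k-2$ enters. The role of $k\ge 2$ is merely to make $2k-1\ge 3$ a meaningful improvement over the trivial bounds (\ref{eq0}) and (\ref{eq00}).
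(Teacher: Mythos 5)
Your proof is correct and uses exactly the same construction as the paper: the $2k-2$ functions assigning $2$ to a single distinguished vertex and $1$ elsewhere, together with the all-ones function. Your preliminary counting analysis (at most one function may take the value $2$ at any vertex) is a nice motivation not spelled out in the paper, and your observation that the pointwise sum is $2k-1$ off the chosen vertices is actually slightly more precise than the paper's claim that it equals $2k$ everywhere.
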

\begin{proof}
If $V(G)=\{v_1,v_2,\ldots,v_n\}$, then define  $f_j:V(G)\rightarrow \{0,1,2\}$ by
$f_j(v_j)=2$ and $f_j(x)=1$ for $x\in V(G)-\{v_j\}$ and $1\le j\le 2k-2$ and
$f_{2k-1}:V(G)\rightarrow \{0,1,2\}$ by $f_{2k-1}(x)=1$ for each $x\in V(G)$. Then
$f_1,f_2,\ldots,f_{2k-1}$ are distinct with $\sum_{i=1}^{2k-1}f_i(x)=2k$ for each $x\in V(G)$.
Therefore $\{f_1,f_2,\ldots,f_{2k-1}\}$ is an R$(k,k)$D family on $G$, and thus $d_R^k(G)\ge 2k-1$.
\end{proof}

The last two observations lead to the next result immediately.
\begin{corollary}
\emph{Let $k\ge 2$ be an integer. If $G$ is a graph of order $n\ge 2k-2$ and $k\ge\Delta(G)+1$, then
$d_R^k(G)=2k-1$.}
\end{corollary}

\begin{observation}
\label{mapping} \emph{Let $k\ge 1$ be an integer, and let $G$ be a graph of order $n$. If
$k\ge 2^n$, then $d_R^k(G)=2^n$.}
\end{observation}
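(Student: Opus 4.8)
The plan is to establish the two inequalities $d_R^k(G)\le 2^n$ and $d_R^k(G)\ge 2^n$ by first pinning down exactly which labelings can be Roman $k$-dominating functions when $k$ is this large. The key preliminary observation is that $k\ge 2^n> n-1\ge\Delta(G)$, so every vertex of $G$ has strictly fewer than $k$ neighbors. Consequently no vertex can have $k$ neighbors labelled $2$, and therefore the label $0$ is forbidden: every RkDF on $G$ is a labeling $f\colon V(G)\to\{1,2\}$. Conversely, every such labeling is trivially an RkDF, since the defining condition on vertices with label $0$ is then vacuous. Hence the set of RkDFs on $G$ is precisely the set of all $2^n$ functions from $V(G)$ into $\{1,2\}$.

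This identification immediately yields the upper bound. Any R$(k,k)$D family consists of \emph{distinct} RkDFs, and there are only $2^n$ of these, so $d_R^k(G)\le 2^n$.

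For the lower bound I would take the family of \emph{all} $2^n$ labelings $f\colon V(G)\to\{1,2\}$ and verify the summation constraint directly. Fixing a vertex $v$, among these $2^n$ labelings exactly $2^{n-1}$ assign $v$ the value $1$ and exactly $2^{n-1}$ assign it the value $2$, so the total value at $v$ over the whole family is $2^{n-1}\cdot 1+2^{n-1}\cdot 2=3\cdot 2^{n-1}$. Since $k\ge 2^n$ we have $2k\ge 2^{n+1}=4\cdot 2^{n-1}\ge 3\cdot 2^{n-1}$, so $\sum_i f_i(v)\le 2k$ holds for every $v\in V(G)$. Thus all $2^n$ labelings together constitute an R$(k,k)$D family, giving $d_R^k(G)\ge 2^n$, and combining with the upper bound proves equality.

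I do not expect a serious obstacle here. The crux is the single observation that the hypothesis $k\ge 2^n$ forces $k>\Delta(G)$, which collapses the family of RkDFs onto the full set of $\{1,2\}$-labelings; once this is seen, the counting argument for the upper bound and the averaging computation for the lower bound are both routine. The only point requiring a little care is the inequality $3\cdot 2^{n-1}\le 2k$, and this is precisely where the full strength of $k\ge 2^n$, rather than merely $k>\Delta(G)$, is used.
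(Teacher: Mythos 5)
Your proposal is correct and follows essentially the same route as the paper: both proofs observe that $k\ge 2^n>\Delta(G)$ forces every RkDF to avoid the label $0$, so the RkDFs are exactly the $2^n$ maps into $\{1,2\}$, which gives the upper bound; and both take the family of all such maps for the lower bound, the only cosmetic difference being that you compute the exact column sum $3\cdot 2^{n-1}$ where the paper uses the cruder bound $\sum_i f_i(v)\le 2\cdot 2^n\le 2k$.
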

\begin{proof}
Let $\{f_1,f_2,\ldots,f_d\}$ be the set of all pairwise distinct functions from $V(G)$ into the
set $\{1,2\}$. Then $f_i$ is a Roman $k$-dominating function on $G$ for $1\le i\le d$, and it
is well-known that $d=2^n$. The hypothesis $k\ge 2^n$ leads to
$$\sum_{i=1}^df_i(v)\le 2d=2\cdot 2^n\le 2k$$
for each vertex $v\in V(G)$. Therefore  $\{f_1,f_2,\ldots,f_d\}$ is an R$(k,k)$D family on $G$
and thus $d_R^k(G)\ge 2^n$.

Now let $f:V(G)\longrightarrow\{0,1,2\}$ be a Roman $k$-dominating function on $G$. Since
$k\ge 2^n>n>\Delta(G)$, it is impossible that $f(x)=0$ for any vertex $x\in V(G)$. Hence the
number of Roman $k$-dominating functions on $G$ is at least $2^n$ and so $d_R^k(G)\le 2^n$.
This yields the desired identity.
\end{proof}


\begin{observation}
\label{complete} \emph{If $k\ge 1$ is an integer, then $\gamma_{kR}(K_n)=\min\{n,2k\}$.}
\end{observation}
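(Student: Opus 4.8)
The plan is to read off the structure of the claim from the form of the right-hand side: $\min\{n,2k\}$ switches its value exactly at $n=2k$, so a natural case split is $n\le 2k$ versus $n\ge 2k+1$. In both regimes Proposition~\ref{V0} will do almost all of the work, and the only genuinely constructive part occurs in the second case.

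First I would dispose of the case $n\le 2k$. Here $\min\{n,2k\}=n$, and Proposition~\ref{V0} states that every graph of order $n\le 2k$ has $\gamma_{kR}=n$; applying this to $G=K_n$ gives $\gamma_{kR}(K_n)=n=\min\{n,2k\}$ immediately, with nothing further to check.

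Next I would treat $n\ge 2k+1$, where $\min\{n,2k\}=2k$ and both an upper and a lower bound are needed. The lower bound $\gamma_{kR}(K_n)\ge 2k$ is exactly the second assertion of Proposition~\ref{V0} for graphs of order $n\ge 2k+1$. For the matching upper bound I would exhibit an explicit Roman $k$-dominating function of weight $2k$: since $n\ge 2k+1>k$, choose a set $S\subseteq V(K_n)$ with $|S|=k$, assign label $2$ to every vertex of $S$ and label $0$ to every vertex of $V(K_n)\setminus S$. Because $K_n$ is complete, each vertex outside $S$ is adjacent to all $k$ vertices of $S$ and hence has at least $k$ neighbors with label $2$, so $f$ is an RkDF; its weight is $2|S|=2k$. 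Combining the two bounds yields $\gamma_{kR}(K_n)=2k$ in this case.

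I do not expect a serious obstacle here, since Proposition~\ref{V0} supplies every bound except the explicit construction. The only points requiring a moment of care are verifying that the case boundary $n=2k$ is handled consistently (both formulas give $n=2k$ there) and confirming that the construction is feasible, i.e.\ that $n\ge k$ so that a set $S$ of size $k$ exists, which holds since $n\ge 2k+1$. Assembling the two cases then gives $\gamma_{kR}(K_n)=\min\{n,2k\}$ as desired.
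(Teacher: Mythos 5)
Your proposal is correct and follows exactly the paper's own argument: Proposition~\ref{V0} settles the case $n\le 2k$ and gives the lower bound $\gamma_{kR}(K_n)\ge 2k$ when $n\ge 2k+1$, and the explicit function assigning $2$ to $k$ vertices and $0$ elsewhere is precisely the construction the paper uses for the matching upper bound. No differences worth noting.
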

\begin{proof}
If $n\le 2k$, then Proposition \ref{V0} implies that $\gamma_{kR}(K_n)=n$.

Assume now that $n\ge 2k+1$. It follows from Proposition \ref{V0} that $\gamma_{kR}(K_n)\ge 2k$.
Let $V(K_n)=\{v_1,v_2,\ldots,v_n\}$, and define  $f:V(K_n)\rightarrow \{0,1,2\}$ by
$f(v_1)=f(v_2)=\ldots=f(v_k)=2$ and $f(v_j)=0$ for $k+1\le j\le n$.
Then $f$ is an RkDF on $K_n$ of weight $2k$ and thus $\gamma_{kR}(K_n)\le 2k$, and the proof is complete.

\end{proof}


\section{Properties of the Roman ($k,k$)-domatic number}

In this section we present basic properties of $d_{R}^k(G)$ and sharp bounds
on the Roman $(k,k)$-domatic number of a graph.

\begin{theorem}
\label{gammast} \emph{Let $G$ be a graph of order $n$ with Roman $k$%
-domination number $\gamma_{kR}(G)$ and Roman $(k,k)$-domatic number $%
d_{R}^k(G)$. Then
\begin{equation*}
\gamma_{kR}(G)\cdot d_{R}^k(G)\le 2kn.
\end{equation*}
Moreover, if $\gamma_{kR}(G)\cdot d_{R}^k(G)= 2kn,$ then for each R$(k,k)$D
family $\{f_1,f_2,\ldots ,f_d\}$ on $G$ with $d=d_{R}^k(G)$, each function $%
f_i$ is a $\gamma_{kR}(G)$-function and $\sum_{i=1}^df_i(v)=2k$ for all $%
v\in V$. }
\end{theorem}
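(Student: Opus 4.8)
The plan is to use a double-counting argument on the total weight of the functions in a maximum Roman $(k,k)$-dominating family. First I would fix an R$(k,k)$D family $\{f_1,f_2,\ldots,f_d\}$ on $G$ with $d=d_R^k(G)$. Since each $f_i$ is by itself a Roman $k$-dominating function, its weight satisfies $\omega(f_i)=\sum_{v\in V}f_i(v)\ge \gamma_{kR}(G)$, directly from the definition of the Roman $k$-domination number as the minimum weight of such a function.

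Next I would sum these weights over all $d$ functions and interchange the order of summation, writing $\sum_{i=1}^d\omega(f_i)=\sum_{v\in V}\sum_{i=1}^d f_i(v)$. The defining property of an R$(k,k)$D family gives $\sum_{i=1}^d f_i(v)\le 2k$ for every vertex $v$, so the right-hand side is at most $2kn$. Combining this with the lower bound $\sum_{i=1}^d\omega(f_i)\ge d\cdot\gamma_{kR}(G)$ produces the chain $d\cdot\gamma_{kR}(G)\le \sum_{i=1}^d\omega(f_i)=\sum_{v\in V}\sum_{i=1}^d f_i(v)\le 2kn$, which is precisely the asserted inequality $\gamma_{kR}(G)\cdot d_R^k(G)\le 2kn$.

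For the equality characterization, I would observe that if $\gamma_{kR}(G)\cdot d_R^k(G)=2kn$, then every inequality in the chain above must in fact be an equality. Equality in $d\cdot\gamma_{kR}(G)\le \sum_{i=1}^d\omega(f_i)$ forces $\omega(f_i)=\gamma_{kR}(G)$ for each $i$, so that every $f_i$ is a $\gamma_{kR}(G)$-function; and equality in $\sum_{v\in V}\sum_{i=1}^d f_i(v)\le 2kn$ forces $\sum_{i=1}^d f_i(v)=2k$ for each $v\in V$, since the sum of $n$ terms each bounded by $2k$ can reach $2kn$ only when every term attains the value $2k$.

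This is a clean averaging argument, and I do not anticipate a genuine obstacle. The only points requiring care are that the lower bound $\omega(f_i)\ge\gamma_{kR}(G)$ relies on nothing more than each member of the family being a Roman $k$-dominating function, and that the equality analysis is simply the standard observation that a collection of quantities, each bounded above by a fixed value, attains its extreme total exactly when each quantity is individually extremal.
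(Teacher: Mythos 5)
Your proposal is correct and follows essentially the same argument as the paper: the same double-counting chain $d\cdot\gamma_{kR}(G)\le\sum_{i=1}^{d}\sum_{v\in V}f_i(v)=\sum_{v\in V}\sum_{i=1}^{d}f_i(v)\le 2kn$, with the equality case read off by forcing both inequalities in the chain to be equalities. Since your family was an arbitrary one of size $d_R^k(G)$, the conclusion indeed holds for every such family, exactly as stated.
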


\begin{proof}
Let $\{f_1, f_2,\ldots,f_d\}$ be an R$(k,k)$D family on $G$ such that $d =
d_{R}^k(G)$ and let $v\in V$. Then
\begin{eqnarray*}
d\cdot \gamma_{kR}(G) & = & \sum_{i=1}^d\gamma_{kR}(G) \\
& \le & \sum_{i=1}^d\sum_{v\in V}f_i(v) \\
& = & \sum_{v\in V}\sum_{i=1}^df_i(v) \\
& \le & \sum_{v\in V} 2k \\
& = & 2kn.
\end{eqnarray*}

If $\gamma_{kR}(G)\cdot d_{R}^k(G)=2kn,$ then the two inequalities occurring
in the proof become equalities. Hence for the R$(k,k)$D family $\{f_1,f_2,
\ldots ,f_d\}$ on $G$ and for each $i$, $\sum_{v\in V}f_i(v)=\gamma_{kR}(G)$%
, thus each function $f_i$ is a $\gamma_{kR}(G)$-function, and $%
\sum_{i=1}^df_i(v)=2k$ for all $v\in V$.
\end{proof}

\begin{theorem}
\label{Th:2} \emph{Let $G$ be a graph of order $n\geq 2$ and $k\geq 1$ be an
integer. Then $\gamma _{kR}(G)=n$ and $d_{R}^{k}(G)=2k$ if and only if $G$
does not contain a bipartite subgraph $H$ with bipartition $X,Y$ such that $%
|X|>|Y|\geq k$ and $\deg _{H}(v)\geq k$ for each $v\in X$ and $G$ has $2k$
or $2k-1$ connected bipartite subgraphs $H_{i}=(X_{i},Y_{i})$ with $%
|X_{i}|=|Y_{i}|$, $\deg _{H_{i}}(v)\geq k$ for each $v\in X_{i}$ and $%
|\{i\mid u\in Y_{i}\}|=|\{i\mid u\in X_{i}\}|=k$ for each $u\in V(G)$.}
\end{theorem}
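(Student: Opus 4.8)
The plan is to first reduce the unwieldy first half of the right-hand condition to a statement about $\gamma_{kR}$. Since the all-ones labeling is always an RkDF of weight $n$, we have $\gamma_{kR}(G)\le n$ for every $G$; hence, by Proposition \ref{V1}, the clause ``$G$ does not contain a bipartite subgraph $H=(X,Y)$ with $|X|>|Y|\ge k$ and $\deg_H(v)\ge k$ for each $v\in X$'' is \emph{exactly} equivalent to $\gamma_{kR}(G)=n$. So throughout I would assume $\gamma_{kR}(G)=n$ and reduce the theorem to: $d_R^k(G)=2k$ if and only if $G$ carries $2k$ or $2k-1$ connected balanced bipartite subgraphs of the stated kind. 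Moreover, under $\gamma_{kR}(G)=n$, Theorem \ref{gammast} already gives $d_R^k(G)\le 2k$ and, in the case of equality, tells us that every optimal R$(k,k)$D family $\{f_1,\dots,f_{2k}\}$ consists of $\gamma_{kR}(G)$-functions (each of weight $n$) with $\sum_{i}f_i(u)=2k$ at every vertex $u$. These two facts are the backbone for both directions.

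For the forward direction I would extract the subgraphs directly from an optimal family. Each $f_i$ has weight $n$, so $|V_0^{f_i}|=|V_2^{f_i}|$, and being an RkDF it satisfies $\deg(v)\ge k$ into $V_2^{f_i}$ for every $v\in V_0^{f_i}$. Setting $X_i=V_0^{f_i}$, $Y_i=V_2^{f_i}$ and letting $H_i$ be the bipartite subgraph of $G$ they induce, the degree condition is immediate. The key sub-lemma is that each connected component $C$ of $H_i$ is \emph{balanced}: every vertex of $X_i\cap C$ still has $\ge k$ neighbours in $Y_i\cap C$, so if $|X_i\cap C|>|Y_i\cap C|$ then Proposition \ref{V1} would give $\gamma_{kR}(G)<n$, a contradiction; hence $|X_i\cap C|\le|Y_i\cap C|$ for each $C$, and summed against the global equality $|X_i|=|Y_i|$ this forces $|X_i\cap C|=|Y_i\cap C|$ in every component. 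Passing to components thus yields connected balanced subgraphs with the required degree property. Finally the pointwise identity $\sum_i f_i(u)=2k$ is translated into the counting condition $|\{i\mid u\in X_i\}|=|\{i\mid u\in Y_i\}|$; pinning this common value to $k$ (equivalently, arranging that no vertex ever receives label $1$, so that each $f_i$ uses only labels $0,2$) and bookkeeping the resulting number of components as $2k$, or as $2k-1$ when an all-ones function is present, is where the delicate work lies.

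For the backward direction I would run this construction in reverse. Given $2k$ subgraphs $H_i=(X_i,Y_i)$, define $f_i$ by $f_i\equiv 0$ on $X_i$, $f_i\equiv 2$ on $Y_i$, and $f_i\equiv 1$ elsewhere. The degree hypothesis makes each $f_i$ an RkDF, and using the count $|\{i\mid u\in X_i\}|=|\{i\mid u\in Y_i\}|=k$ together with the disjointness $X_i\cap Y_i=\emptyset$, one checks $\sum_i f_i(u)=2k$ at every vertex, so $\{f_1,\dots,f_{2k}\}$ is an R$(k,k)$D family and $d_R^k(G)\ge 2k$; distinctness of the $f_i$ follows from distinctness of the subgraphs. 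In the case of $2k-1$ subgraphs I would build $f_1,\dots,f_{2k-1}$ the same way and adjoin the all-ones function $f_{2k}$ to absorb the remaining weight, then verify $\sum_i f_i(u)=2k$ again. Combined with the upper bound $d_R^k(G)\le 2k$ from Theorem \ref{gammast}, this yields $d_R^k(G)=2k$.

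The main obstacle I anticipate is the exact bookkeeping in the middle: reconciling the forced equality $|\{i\mid u\in X_i\}|=|\{i\mid u\in Y_i\}|$ (which Theorem \ref{gammast} hands us) with the sharper value $k$ demanded in the statement, controlling the role of vertices labelled $1$, and matching the precise number of connected pieces to the dichotomy $2k$ versus $2k-1$ (the latter being exactly the situation in which an all-ones function joins the family). Everything else --- the translation through Proposition \ref{V1}, the balancedness of components, and the two explicit constructions --- is routine once this counting is organized correctly.
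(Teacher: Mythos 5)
Your skeleton is the same as the paper's: reduce the first clause to $\gamma_{kR}(G)=n$ via Proposition \ref{V1}, get $d_{R}^{k}(G)\le 2k$ and the equality properties from Theorem \ref{gammast}, extract $X_i=V_0^{f_i}$, $Y_i=V_2^{f_i}$ from an optimal family in the forward direction, and in the converse define $f_i$ to be $0$ on $X_i$, $2$ on $Y_i$, $1$ elsewhere, adjoining the all-ones function in the $2k-1$ case. However, the two places you flag as ``where the delicate work lies'' are not routine bookkeeping you may defer; they are genuine gaps. First, pinning the common count to $k$: writing $a_u,b_u,c_u$ for the number of indices $i$ with $f_i(u)=2,1,0$ respectively, the identity $\sum_i f_i(u)=2k$ with $d=2k$ gives only $a_u=c_u=k-b_u/2$; the value $k$ is attained if and only if $b_u=0$, and nothing in your argument forces $b_u=0$ (the paper simply asserts this implication without justification). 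Second, your component-splitting lemma --- a good idea, since the paper never addresses the connectivity demanded by the statement --- produces one connected subgraph per component of each $H_i$, in general more than $2k$ of them, and you never reconcile this with the statement's requirement of exactly $2k$ or $2k-1$ subgraphs.

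Moreover, these gaps cannot be closed, because the statement fails at exactly these points. Take $k=1$ and $G=K_2\cup K_1$: then $\gamma_{R}(G)=3=n$ and $d_{R}(G)=2=2k$, and in the (essentially unique) optimal family $\{(2,0,1),(0,2,1)\}$ the isolated vertex receives label $1$ from both functions, so its counts are $0$, not $k$; indeed an isolated vertex can lie in no connected balanced subgraph at all, so the right-hand side is violated while the left-hand side holds. Similarly $G=2K_2$ with $k=1$ satisfies the left-hand side, but any system of connected balanced subgraphs meeting the counting condition consists of four single-edge subgraphs, not $2k=2$ or $2k-1=1$. Note also that with $2k-1$ subgraphs the counting condition is outright impossible (a vertex would have to lie in $k$ of the sets $X_i$ and in $k$ of the disjoint sets $Y_i$ among only $2k-1$ indices), so your plan to ``verify $\sum_i f_i(u)=2k$ again'' in that case cannot succeed as described. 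In short, your proposal reproduces the paper's route and stalls honestly at exactly the two steps where the paper's own proof makes unjustified assertions (the count equal to $k$, and connectivity); what you have is not a complete proof, and no completion exists without amending the theorem.
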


\begin{proof}
Let $\gamma _{kR}(G)=n$ and $d_{R}^{k}(G)=2k$. It follows from Proposition %
\ref{V1} that $G$ does not contain a bipartite subgraph $H$ with bipartition
$X,Y$ such that $|X|>|Y|\geq k$ and $\deg _{H}(v)\geq k$ for each $v\in X$.
Let $\{f_{1},\ldots ,f_{2k}\}$ be a Roman $(k,k)$-dominating family on $G$.
By Theorem \ref{gammast}, $\gamma _{kR}(G)=\omega (f_{i})=n$ for each $i$.
First let for each $i$, there exists a vertex $x$ such that $f_{i}(x)\neq 1$%
. Let $H_{i}$ be a subgraph of $G$ with vertex set $V_{0}^{f_{i}}\cup
V_{2}^{f_{i}}$ and edge set $E(V_{0}^{f_{i}},V_{2}^{f_{i}})$. Since $\omega
(f_{i})=n$ and $f_{i}$ is a Roman $k$-dominating function, $%
|V_{2}^{f_{i}}|=|V_{0}^{f_{i}}|$ and $\deg _{H_{i}}(v)\geq k$ for each $v\in
V_{0}^{f_{i}}$. By Theorem \ref{gammast}, $\sum_{i=1}^{2k}f_{i}(v)=2k$ for
each $v\in V(G)$ which implies that $|\{i\mid v\in V_{2}^{f_{i}}\}|=|\{i\mid
v\in V_{0}^{f_{i}}\}|=k$ for each $v\in V(G)$. Now let $f_{i}(x)=1$ for each
$x\in V(G)$ and some $i$, say $i=2k$. Define the bipartite subgraphs $H_{i}$
for $1\leq i\leq 2k-1$ as above.

Conversely, assume that $G$ does not contain a bipartite subgraph $H$ with
bipartition $X, Y$ such that $|X|>|Y |\ge k$ and $\deg_H(v)\ge k$ for each $%
v\in X$ and $G$ has $2k$ or $2k-1$ connected bipartite subgraphs $%
H_i=(X_i,Y_i)$ with $|X_i|=|Y_i|$ and $\deg_{H_i}(v)\ge k$ for each $v\in X_i
$. Then by Proposition \ref{V1}, $\gamma_{kR}(G)=n$. If $G$ has $2k$
connected bipartite subgraphs $H_i$, then the mappings $f_i:V(G)\rightarrow
\{0,1,2\}$ defined by
\begin{equation*}
f_i(u)=2\;\mathrm{if}\; u\in Y_i,\;f_i(v)=0\;\mathrm{if}\; v\in X_i,\newline
\mathrm{and}\; f_i(x)=1\;\mathrm{for\; each}\; x\in V-(X_i\cup Y_i)
\end{equation*}
are Roman $k$-dominating functions on $G$ and $\{f_i\mid 1\le i\le 2k\}$ is
a Roman $(k,k)$-dominating family on $G$. If $G$ has $2k-1$ connected
bipartite subgraphs $H_i$, then the mappings $f_i,g:V(G)\rightarrow \{0,1,2\}
$ defined by $g(x)=1 \;\mathrm{for\; each}\; x\in V(G)$ and
\begin{equation*}
f_i(u)=2\;\mathrm{if}\; u\in Y_i,\;f_i(v)=0\;\mathrm{if}\; v\in X_i,\newline
\mathrm{and}\; f_i(x)=1\;\mathrm{for\; each}\; x\in V-(X_i\cup Y_i)
\end{equation*}
are Roman $k$-dominating functions on $G$ and $\{g,f_i\mid 1\le i\le 2k-1\}$
is a Roman $(k,k)$-dominating family on $G$.

Thus $d_{R}^k(G)\ge 2k$. It follows from Theorem \ref{gammast} that $%
d_{R}^k(G)= 2k$, and the proof is complete.
\end{proof}

The next corollary is an immediate consequence of Proposition \ref{Delta}, Observation \ref{Ob:3} and
Theorem \ref{gammast}.

\begin{corollary}
\label{Delta1} \emph{For every graph $G$ of order $n$, $d_{R}^k(G)\leq \max\{\Delta, k-1\} +k$.}

\end{corollary}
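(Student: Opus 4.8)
The plan is to split into two cases according to whether the maximum degree $\Delta=\Delta(G)$ is at least $k$ or not, applying a different one of the quoted results in each case, and then to observe that in both cases the resulting bound coincides with $\max\{\Delta,k-1\}+k$.

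First I would handle the case $\Delta\ge k$. Here Proposition \ref{Delta} applies and gives
$$\gamma_{kR}(G)\ge\left\lceil\frac{2n}{\frac{\Delta}{k}+1}\right\rceil\ge\frac{2kn}{\Delta+k}.$$
Combining this lower bound with the product inequality $\gamma_{kR}(G)\cdot d_R^k(G)\le 2kn$ of Theorem \ref{gammast} and rearranging yields
$$d_R^k(G)\le\frac{2kn}{\gamma_{kR}(G)}\le\frac{2kn}{\,2kn/(\Delta+k)\,}=\Delta+k.$$
Since $\Delta\ge k>k-1$ forces $\max\{\Delta,k-1\}=\Delta$, this is precisely the asserted bound $d_R^k(G)\le\max\{\Delta,k-1\}+k$ in this range.

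The remaining case is $\Delta\le k-1$, that is, $k\ge\Delta+1$. Here Observation \ref{Ob:3} applies directly and gives $d_R^k(G)\le 2k-1$. Since $\max\{\Delta,k-1\}=k-1$ in this range, we have $\max\{\Delta,k-1\}+k=(k-1)+k=2k-1$, so again $d_R^k(G)\le\max\{\Delta,k-1\}+k$. The two cases are disjoint and exhaustive, so together they establish the corollary.

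I do not expect any genuine obstacle: each case reduces immediately to a single quoted result. The one point requiring a moment's care is the arithmetic in the first case, where one must first discard the ceiling to obtain the weaker but cleaner lower bound $\gamma_{kR}(G)\ge 2kn/(\Delta+k)$ before inverting through Theorem \ref{gammast}, and then verify that $2kn$ divided by $2kn/(\Delta+k)$ simplifies exactly to $\Delta+k$.
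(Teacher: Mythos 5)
Your proof is correct and matches the paper's intended argument: the paper derives this corollary as an ``immediate consequence'' of Proposition \ref{Delta}, Observation \ref{Ob:3} and Theorem \ref{gammast}, which is exactly your case split ($\Delta\ge k$ handled via Proposition \ref{Delta} combined with the product bound of Theorem \ref{gammast}, and $k\ge\Delta+1$ handled by Observation \ref{Ob:3}). The arithmetic in your first case, including discarding the ceiling to get $\gamma_{kR}(G)\ge 2kn/(\Delta+k)$, is sound.
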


Let $A_1\cup A_2\cup\ldots\cup A_d$ be a $k$-domatic partition of $V(G)$
into $k$-dominating sets such that $d=d_k(G)$. Then the set of functions $%
\{f_1,f_2,\ldots,f_d\}$ with $f_i(v)=2$ if $v\in A_i$ and $f_i(v)=0$
otherwise for $1\le i\le d$ is an RkD family on $G$. This shows that $%
d_k(G)\le d_{kR}(G)$ for every graph $G$. Since $\gamma_{kR}(G)\ge
\min\{n,\gamma_k(G)+k\}$ (cf. \cite{KV}), for each graph $G$ of order $n\ge 2
$, Theorem \ref{gammast} implies that $d_{R}^k(G)\le \frac{2kn}{%
\min\{n,\gamma_k(G)+k\}}$. Combining these two observations, we obtain the
following result.

\begin{corollary}
\label{cor1} \emph{For any graph $G$ of order $n$,
\begin{equation*}
d_k(G)\le d_{R}^k(G)\le \frac{2kn}{\min\{n,\gamma_k(G)+k\}}.
\end{equation*}%
}
\end{corollary}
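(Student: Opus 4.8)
The plan is to establish the two inequalities separately. For the lower bound $d_k(G)\le d_R^k(G)$ I would convert an optimal $k$-domatic partition into an R$(k,k)$D family of the same size, and for the upper bound I would simply combine Theorem \ref{gammast} with the known estimate $\gamma_{kR}(G)\ge\min\{n,\gamma_k(G)+k\}$ recorded in \cite{KV}.

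For the lower bound, let $A_1\cup A_2\cup\cdots\cup A_d$ be a $k$-domatic partition of $V(G)$ with $d=d_k(G)$, and for each $i$ define $f_i\colon V(G)\to\{0,1,2\}$ by $f_i(v)=2$ if $v\in A_i$ and $f_i(v)=0$ otherwise. I would first check that each $f_i$ is an RkDF: a vertex $v$ with $f_i(v)=0$ lies outside the $k$-dominating set $A_i$, hence has at least $k$ neighbors in $A_i$, all carrying label $2$. Next I would verify the family condition: since the $A_i$ partition $V(G)$, each vertex belongs to exactly one part, so $\sum_{i=1}^d f_i(v)=2\le 2k$ for every $v$, using $k\ge 1$. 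As the parts are nonempty and pairwise distinct, the functions $f_i$ are distinct, so $\{f_1,\ldots,f_d\}$ is an R$(k,k)$D family of order $d=d_k(G)$, which yields $d_k(G)\le d_R^k(G)$.

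For the upper bound, Theorem \ref{gammast} gives $\gamma_{kR}(G)\cdot d_R^k(G)\le 2kn$, so that
\[
d_R^k(G)\le\frac{2kn}{\gamma_{kR}(G)}.
\]
Substituting the inequality $\gamma_{kR}(G)\ge\min\{n,\gamma_k(G)+k\}$ from \cite{KV} into the denominator then produces the claimed bound $d_R^k(G)\le \frac{2kn}{\min\{n,\gamma_k(G)+k\}}$.

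I do not expect a serious obstacle here; the argument is mostly bookkeeping. The only point deserving genuine care is the lower bound, where one must confirm that the weight-$2$ indicator functions really are Roman $k$-dominating, which is precisely the $k$-domination property of each part, and that the partition structure forces $\sum_i f_i(v)=2$, comfortably within the budget $2k$. The content of the upper bound is imported wholesale from Theorem \ref{gammast} and the cited lower estimate on $\gamma_{kR}(G)$, so the work there reduces to a single substitution.
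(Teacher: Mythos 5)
Your proposal is correct and follows essentially the same route as the paper: the paper also converts an optimal $k$-domatic partition into weight-$2$ indicator functions to get $d_k(G)\le d_R^k(G)$, and obtains the upper bound by combining Theorem \ref{gammast} with the estimate $\gamma_{kR}(G)\ge\min\{n,\gamma_k(G)+k\}$ from \cite{KV}. Your write-up is if anything slightly more careful, since you explicitly verify the RkDF property, the sum condition $\sum_i f_i(v)=2\le 2k$, and the distinctness of the functions, which the paper leaves implicit.
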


\begin{theorem}
\label{Knk} \emph{Let $K_n$ be the complete graph of order $n$ and $k$ a
positive integer. Then $d_{R}^k(K_n)=n$ if $n\ge 2k$,  $d_{R}^k(K_n)\le 2k-1$ if $n\le 2k-1$
and $d_R^k(K_n)=2k-1$ if $k\ge 2$ and $2k-2\le n\le 2k-1$.}
\end{theorem}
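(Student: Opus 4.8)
The plan is to prove the three claims of Theorem \ref{Knk} separately, drawing on the machinery already in place. For the first claim, $d_R^k(K_n)=n$ when $n\ge 2k$, the upper bound $d_R^k(K_n)\le n$ should follow from Theorem \ref{gammast} together with Observation \ref{complete}: since $n\ge 2k$ gives $\gamma_{kR}(K_n)=2k$, we get $d_R^k(K_n)\le \frac{2kn}{2k}=n$. The matching lower bound is the constructive part. I would partition $V(K_n)=\{v_1,\dots,v_n\}$ and, for each $j$ with $1\le j\le n$, define a Roman $k$-dominating function that places label $2$ on a block of $k$ consecutive vertices (indices taken cyclically, say $v_j,v_{j+1},\dots,v_{j+k-1}$) and label $0$ elsewhere; because $K_n$ is complete every zero vertex sees all $k$ of the $2$-labeled vertices, so each such $f_j$ is a valid RkDF. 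The key verification is that $\sum_{j=1}^n f_j(v)\le 2k$ for every vertex $v$: each vertex lies in exactly $k$ of the cyclic blocks, so it receives label $2$ exactly $k$ times and the sum is exactly $2k$. This yields an R$(k,k)$D family of size $n$, hence $d_R^k(K_n)\ge n$.

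For the second claim, $d_R^k(K_n)\le 2k-1$ when $n\le 2k-1$, I would invoke Observation \ref{Ob:3}. When $n\le 2k-1$ we have $n\le 2k-1$, so $\Delta(K_n)=n-1\le 2k-2$, giving $k\ge \Delta(K_n)+1$; Observation \ref{Ob:3} then directly yields $d_R^k(K_n)\le 2k-1$. The third claim, $d_R^k(K_n)=2k-1$ for $k\ge 2$ and $2k-2\le n\le 2k-1$, combines this upper bound with a lower bound. Since $n\ge 2k-2$ and $k\ge 2$, Observation \ref{obs2} gives $d_R^k(K_n)\ge 2k-1$; together with the second claim's upper bound $d_R^k(K_n)\le 2k-1$ (valid since $n\le 2k-1$), equality follows.

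The main obstacle I anticipate is the lower-bound construction for the first claim: one must exhibit $n$ \emph{distinct} RkDFs whose pointwise sum is bounded by $2k$, and the natural ``rotate a block of $k$ twos'' construction needs care to confirm both distinctness (the $n$ cyclic shifts are pairwise distinct precisely because $1\le k\le n-1$ would matter, but here $k\le n/2<n$) and the exact per-vertex count of $k$ occurrences. A clean way to organize this is to let $A_j=\{v_j,v_{j+1},\dots,v_{j+k-1}\}$ with indices modulo $n$, set $f_j(v)=2$ for $v\in A_j$ and $f_j(v)=0$ otherwise, and observe that each $v_i$ belongs to exactly the $k$ sets $A_j$ with $j\in\{i-k+1,\dots,i\}$ (mod $n$). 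The rest is bookkeeping. All other steps reduce to citing Observations \ref{Ob:3} and \ref{obs2} and Theorem \ref{gammast}, so no substantial new difficulty arises beyond this construction.
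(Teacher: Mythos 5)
Your first claim is handled exactly as in the paper: Observation \ref{complete} and Theorem \ref{gammast} give $d_R^k(K_n)\le n$ when $n\ge 2k$, and your cyclic-block construction is precisely the paper's; the distinctness and per-vertex counting checks you flag are correct since $k<n$. Likewise, the lower bound $d_R^k(K_n)\ge 2k-1$ in the third claim via Observation \ref{obs2} matches the paper.

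However, your proof of the second claim has a genuine gap. You argue that $n\le 2k-1$ gives $\Delta(K_n)=n-1\le 2k-2$ and hence $k\ge\Delta(K_n)+1$. That last step is a non sequitur: $\Delta(K_n)+1=n\le 2k-1$ does not imply $n\le k$. For instance, with $k=3$ and $n=5=2k-1$ we have $\Delta(K_5)+1=5>3=k$, so the hypothesis of Observation \ref{Ob:3} fails; that observation applies only when $n\le k$, leaving the range $k+1\le n\le 2k-1$ (nonempty for every $k\ge 2$) uncovered. This range includes exactly the values $n\in\{2k-2,2k-1\}$ your third claim needs when $k\ge 3$ (and $n=2k-1$ when $k=2$), so the gap propagates there as well. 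The paper closes this case by a different argument: Observation \ref{complete} gives $\gamma_{kR}(K_n)=n$ for $n\le 2k-1$, so Theorem \ref{gammast} yields $d_R^k(K_n)\le 2k$; if equality held, the equality part of Theorem \ref{gammast} would force every function $f_i$ in a largest family to be a $\gamma_{kR}$-function of weight $n<2k$, and such a function cannot assign $0$ to any vertex (a $0$-vertex needs $k$ neighbours labelled $2$, forcing weight at least $2k$), hence $f_i\equiv 1$ for every $i$, contradicting the distinctness of the $2k$ functions. Thus $d_R^k(K_n)\le 2k-1$. You need this (or an equivalent) argument to repair the second claim; once it is in place, your derivation of the third claim goes through as written.
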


\begin{proof}
By Proposition \ref{Knk=1}, we may assume that $k\geq 2$. Assume that $V(K_{n})=\{x_{1},x_{2},...,x_{n}\}$.
First let $n\geq 2k$. Since Observation \ref{complete} implies that $\gamma _{kR}(K_{n})=2k$, it follows
from Theorem \ref{gammast} that $d_{R}^{k}(K_{n})\leq n$. For $1\leq i\leq n$, define now
$f_{i}:V(K_n)\rightarrow \{0,1,2\}$ by
\begin{equation*}
f_{i}(x_{i})=f_{i}(x_{i+1})=\ldots =f_{i}(x_{i+k-1})=2\;\mathrm{and} \;f_{i}(x)=0\;\mathrm{otherwise},
\end{equation*}
where the indices are taken modulo $n$.
It is easy to see that $\{f_{1},f_2,\ldots ,f_{n}\}$ is an R$(k,k)D$ family on $G$ and hence
$d_{R}^{k}(K_{n})\geq n$. Thus $d_{R}^{k}(K_{n})=n$.

Now let $n\leq 2k-1$. Then Observation \ref{complete} yields $\gamma _{kR}(K_{n})=n$, and it follows from
Theorem \ref{gammast} that $d_{R}^{k}(K_{n})\leq 2k$. Suppose to the
contrary that $d_{R}^{k}(K_{n})=2k$. Then by Theorem \ref{gammast}, each
Roman $k$-dominating function $f_{i}$ in any R$(k,k)$D family $\{f_{1},f_2,,\ldots ,f_{2k}\}$ on
$G$ is a $\gamma _{kR}(G)$-function. This implies that $f_{i}(x)=1$ for each $x\in V(K_n)$. Hence
$f_{1}\equiv f_2\equiv \cdots \equiv f_{2k}$ which is a contradiction. Thus $d_{R}^{k}(K_{n})\leq 2k-1$.

In the special case $k\ge 2$ and $2k-2\le n\le 2k-1$, Observation \ref{obs2} shows that $d_R^k(K_n)\ge 2k-1$
and so $d_R^k(K_n)=2k-1$.
\end{proof}

In view of Proposition \ref{pq} and Theorem \ref{gammast} we obtain the next upper bound for
the Roman $(k,k)$-domatic number of complete bipartite graphs.
\begin{corollary}
\label{Kpq} \emph{Let $K_{p,q}$ be the complete bipartite graph of order $p+q$ such that
$q\ge p\ge 1$, and let $k$ be a positive integer. Then $d_{R}^k(K_{p,q})\le 2k$ if $p<k$ or $q=p=k$,
$d_{R}^k(K_{p,q})\le \frac{2k(p+q)}{k+p}$ if  $p+q\ge 2k+1$ and $k\le p\le 3k$ and
$d_{R}^k(K_{p,q})\le \frac{p+q}{2}$ if $p\ge 3k$.}
\end{corollary}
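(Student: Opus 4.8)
The plan is to combine Proposition \ref{pq}, which gives the exact value of $\gamma_{kR}(K_{p,q})$ in each of the three regimes, with the general upper bound $\gamma_{kR}(G)\cdot d_{R}^k(G)\le 2kn$ from Theorem \ref{gammast}, applied to $G=K_{p,q}$ with $n=p+q$. In each case I would simply rearrange the inequality $d_{R}^k(K_{p,q})\le \frac{2kn}{\gamma_{kR}(K_{p,q})}$ and substitute the known value of $\gamma_{kR}(K_{p,q})$.

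Concretely, first I would treat the case $p<k$ or $q=p=k$. Here Proposition \ref{pq} gives $\gamma_{kR}(K_{p,q})=p+q=n$, so Theorem \ref{gammast} yields $d_{R}^k(K_{p,q})\le \frac{2k(p+q)}{p+q}=2k$, which is exactly the first claimed bound. Next, for $p+q\ge 2k+1$ and $k\le p\le 3k$, Proposition \ref{pq} gives $\gamma_{kR}(K_{p,q})=k+p$, so Theorem \ref{gammast} gives directly $d_{R}^k(K_{p,q})\le \frac{2k(p+q)}{k+p}$, the second claimed bound. Finally, for $p\ge 3k$ (so in particular $q\ge p\ge 3k$ and $n=p+q\ge 2k+1$), Proposition \ref{pq} gives $\gamma_{kR}(K_{p,q})=4k$, whence $d_{R}^k(K_{p,q})\le \frac{2k(p+q)}{4k}=\frac{p+q}{2}$, the third claimed bound.

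The only technical point worth checking is that Proposition \ref{pq} is being invoked under exactly the hypotheses it requires, so that the stated value of $\gamma_{kR}(K_{p,q})$ is valid in each regime; once that matching is done, each bound follows from a one-line substitution into Theorem \ref{gammast}. I do not expect any genuine obstacle here: the result is a direct corollary, and its proof is purely a case-by-case arithmetic consequence of the two cited results, exactly as the sentence preceding the corollary announces.

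\begin{proof}
By Theorem \ref{gammast} we have $d_{R}^k(K_{p,q})\le \frac{2k(p+q)}{\gamma_{kR}(K_{p,q})}$. If $p<k$ or $q=p=k$, then Proposition \ref{pq} gives $\gamma_{kR}(K_{p,q})=p+q$, so $d_{R}^k(K_{p,q})\le 2k$. If $p+q\ge 2k+1$ and $k\le p\le 3k$, then Proposition \ref{pq} gives $\gamma_{kR}(K_{p,q})=k+p$, so $d_{R}^k(K_{p,q})\le \frac{2k(p+q)}{k+p}$. Finally, if $p\ge 3k$, then Proposition \ref{pq} gives $\gamma_{kR}(K_{p,q})=4k$, so $d_{R}^k(K_{p,q})\le \frac{2k(p+q)}{4k}=\frac{p+q}{2}$.
\end{proof}
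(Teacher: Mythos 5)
Your proposal is correct and matches the paper's own (implicit) argument: the paper gives no detailed proof, simply stating that the corollary follows "in view of Proposition \ref{pq} and Theorem \ref{gammast}," which is precisely the substitution of the known values of $\gamma_{kR}(K_{p,q})$ into the bound $d_{R}^k(G)\le \frac{2kn}{\gamma_{kR}(G)}$ that you carried out case by case.
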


Let $k\ge 1$ and $t\ge 3$ be two integers, and let $X=\{u_1,u_2,\ldots,u_{tk}\}$ and
$Y=\{v_1,v_2,\ldots,v_{tk}\}$ be the partite sets of the complete bipartite graph $K_{p,q}$ with
$p=q=kt$. For $1\leq i\leq tk$, define $f_{i}:V(K_{p,q}))\rightarrow \{0,1,2\}$ by
\begin{equation*}
f_{i}(u_{i})=f_{i}(u_{i+1})=\ldots =f_{i}(u_{i+k-1})= f_{i}(v_{i})=f_{i}(v_{i+1})=\ldots =f_{i}(v_{i+k-1})= 2
\end{equation*}
and $f_{i}(x)=0$ otherwise, where the indices are taken modulo $tk$.
It is a simple matter to verify that $\{f_{1},f_2,\ldots ,f_{tk}\}$ is an R$(k,k)D$ family on $K_{p,q}$
and hence $d_{R}^{k}(K_{p,q})\geq tk$. Using Corollary \ref{Kpq} for $p=q=tk\ge 3k$, we obtain
$d_{R}^{k}(K_{p,q})=tk=\frac{p+q}{2}$, and therefore Corollary \ref{Kpq} is sharp in that case.

\begin{theorem}
\label{c1} \emph{If $G$ is a graph of order $n\ge 2$, then
\begin{equation}  \label{eq9}
\gamma_{kR}(G)+d_{R}^k(G)\le n+2k
\end{equation}
with equality if and only if $\gamma_{kR}(G)=n$ and $d_{R}^k(G)=2k$ or $%
\gamma_{kR}(G)=2k$ and $d_{R}^k(G)=n$.}
\end{theorem}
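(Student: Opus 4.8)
The plan is to combine the product bound of Theorem~\ref{gammast} with a one-variable estimate. Write $\gamma=\gamma_{kR}(G)$ and $d=d_R^k(G)$. Since $\gamma\ge 1$, Theorem~\ref{gammast} gives $d\le 2kn/\gamma$, so it suffices to bound $\gamma+2kn/\gamma$ from above. First I would pin down the range of $\gamma$: the all-ones labeling is a Roman $k$-dominating function of weight $n$, so $\gamma\le n$, while Proposition~\ref{V0} gives $\gamma=n$ when $n\le 2k$ and $\gamma\ge 2k$ when $n\ge 2k+1$. In every case this yields $(n-\gamma)(\gamma-2k)\ge 0$: when $n\le 2k$ the factor $n-\gamma$ vanishes, and when $n\ge 2k+1$ both factors are nonnegative.

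The heart of the argument is the elementary identity
\[
\gamma+\frac{2kn}{\gamma}=n+2k-\frac{(n-\gamma)(\gamma-2k)}{\gamma},
\]
which is verified by clearing denominators. Because $\gamma>0$ and $(n-\gamma)(\gamma-2k)\ge 0$, the subtracted term is nonnegative, so $\gamma+2kn/\gamma\le n+2k$. Chaining this with $d\le 2kn/\gamma$ gives $\gamma+d\le n+2k$, which is (\ref{eq9}).

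For the equality characterization, I would observe that $\gamma+d=n+2k$ forces both inequalities in the chain $\gamma+d\le\gamma+2kn/\gamma\le n+2k$ to be tight. Tightness of the first means $d=2kn/\gamma$, i.e.\ $\gamma d=2kn$; tightness of the second means $(n-\gamma)(\gamma-2k)=0$, i.e.\ $\gamma=n$ or $\gamma=2k$. Substituting $\gamma=n$ into $\gamma d=2kn$ yields $d=2k$, and substituting $\gamma=2k$ yields $d=n$, giving exactly the two stated alternatives. The converse is immediate, since each of $(\gamma,d)=(n,2k)$ and $(\gamma,d)=(2k,n)$ sums to $n+2k$.

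I do not anticipate a serious obstacle here; the only point requiring care is that the factorization argument seems to need $\gamma\ge 2k$, which fails precisely when $n\le 2k$. But in that regime Proposition~\ref{V0} forces $\gamma=n$, making the factor $n-\gamma$ vanish, so the single identity covers all cases uniformly and no genuine case split is needed beyond invoking Proposition~\ref{V0} to establish the sign of $(n-\gamma)(\gamma-2k)$.
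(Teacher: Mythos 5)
Your proof is correct, and it is a genuinely cleaner execution of the same underlying idea as the paper's. Both arguments rest on the product bound of Theorem~\ref{gammast}, but the paper eliminates $\gamma_{kR}(G)$ and studies the resulting function of $d=d_R^k(G)$, namely $g(d)=d+2kn/d$, invoking its monotonicity on $[2k,\sqrt{2kn}]$ and $[\sqrt{2kn},n]$; since that argument is valid only when $2k\le d\le n$, the paper is forced into a three-way case split ($d\le 2k-1$; $d\ge 2k$ with $\gamma_{kR}(G)\ge 2k$; $d\ge 2k$ with $\gamma_{kR}(G)\le 2k-1$), and its converse in the equality part re-uses inequality (\ref{eq4}), which was established only inside one of those cases. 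You instead eliminate $d$ and work in the variable $\gamma=\gamma_{kR}(G)$, whose admissible range is controlled directly by Proposition~\ref{V0} together with the all-ones labeling: either $\gamma=n$ (when $n\le 2k$) or $2k\le\gamma\le n$ (when $n\ge 2k+1$). That is exactly what makes your single identity $\gamma+2kn/\gamma=n+2k-(n-\gamma)(\gamma-2k)/\gamma$ applicable uniformly, with no case analysis and no appeal to monotonicity; moreover, the equality characterization falls out mechanically, since both links in the chain $\gamma+d\le\gamma+2kn/\gamma\le n+2k$ are tight if and only if $\gamma d=2kn$ and $\gamma\in\{2k,n\}$. In short, the two proofs use the same external inputs (Theorem~\ref{gammast}, Proposition~\ref{V0}, and $\gamma_{kR}(G)\le n$), but yours packages the convexity of $x\mapsto x+2kn/x$ into an explicit factorization, which is more elementary, avoids the case split, and is better suited to reading off exactly when equality occurs.
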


\begin{proof}
If $d_{R}^{k}(G)\leq 2k-1$, then obviously $\gamma _{kR}(G)+d_{R}^{k}(G)\leq
n+2k-1$. Let now $d_{R}^{k}(G)\geq 2k$. If $\gamma _{kR}(G)\geq 2k$, Theorem %
\ref{gammast} implies that $d_{R}^{k}(G)\leq n$. According to Theorem \ref%
{gammast}, we obtain
\begin{equation}
\gamma _{kR}(G)+d_{R}^{k}(G)\leq \frac{2kn}{d_{R}^{k}(G)}+d_{R}^{k}(G).
\label{eq4}
\end{equation}%
Using the fact that the function $g(x)=x+(2kn)/x$ is decreasing for $2k\leq
x\leq \sqrt{2kn}$ and increasing for $\sqrt{2kn}\leq x\leq n$, this
inequality leads to the desired bound immediately.

Now let $\gamma_{kR}(G)\le 2k-1$. Since $\min\{n,\gamma_k(G)+k\}\le
\gamma_{kR}(G)$, we deduce that $\gamma_{kR}(G)=n$. According to Theorem \ref%
{gammast}, we obtain $d_{R}^k(G)\le 2k$ and hence $d_{R}^k(G)= 2k$. Thus
\begin{equation*}
\gamma_{kR}(G)+d_{R}^k(G)= n+2k.
\end{equation*}

If $\gamma_{kR}(G)=n$ and $d_{R}^k(G)=2k$ or $\gamma_{kR}(G)=2k$ and $d_{R}^k(G)=n$%
, then obviously $\gamma_{kR}(G)+d_{R}^k(G)=n+2k$.

Conversely, let equality hold in (\ref{eq9}). It follows from (\ref{eq4})
that
\begin{equation*}
n+2k=\gamma_{kR}(G)+d_{R}^k(G)\le\frac{2kn}{d_{R}^k(G)}+d_{R}^k(G)\le
n+2k,
\end{equation*}
which implies that $\gamma_{kR}(G)=\frac{2kn}{d_{R}^k(G)}$ and $d_{R}^k(G)=2k
$ or $d_{R}^k(G)=n$. This completes the proof.
\end{proof}

The special case $k=1$ of the next result can be found in \cite{SV}.

\begin{theorem}
\label{kdelta} \emph{For every graph $G$ and positive integer $k$,
\begin{equation*}
d_{R}^k(G)\le \delta(G)+2k.
\end{equation*}
Moreover, the upper bound is sharp.}
\end{theorem}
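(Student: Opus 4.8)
The plan is to bound the Roman $(k,k)$-domatic number by counting the total weight assigned to a single vertex of minimum degree $\delta=\delta(G)$, together with its neighbors, across all functions in a maximal R$(k,k)$D family. Let $\{f_1,f_2,\ldots,f_d\}$ be an R$(k,k)$D family on $G$ with $d=d_R^k(G)$, and fix a vertex $v$ with $\deg(v)=\delta$. The constraint $\sum_{i=1}^d f_i(v)\le 2k$ alone is not enough, so the key idea is to exploit the Roman domination condition: whenever $f_i(v)=0$, the function $f_i$ must place label $2$ on at least $k$ neighbors of $v$, which forces weight onto $N(v)$.

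First I would partition the index set $\{1,\ldots,d\}$ according to the value $f_i(v)\in\{0,1,2\}$, say into sets $I_0,I_1,I_2$ of sizes $d_0,d_1,d_2$. From $\sum_i f_i(v)\le 2k$ we get $d_1+2d_2\le 2k$, so $d=d_0+d_1+d_2$ and we must control $d_0$. The core estimate is to sum the weights over the closed neighborhood $N[v]$, or just over $N(v)$: for each $i\in I_0$, the function $f_i$ assigns label $2$ to at least $k$ vertices of $N(v)$, contributing at least $2k$ to $\sum_{u\in N(v)} f_i(u)$. Meanwhile the family constraint gives $\sum_{i=1}^d f_i(u)\le 2k$ for each of the $\delta$ vertices $u\in N(v)$, so
\begin{equation*}
2k\,d_0 \le \sum_{i\in I_0}\sum_{u\in N(v)} f_i(u) \le \sum_{i=1}^d\sum_{u\in N(v)} f_i(u) = \sum_{u\in N(v)}\sum_{i=1}^d f_i(u) \le 2k\,\delta.
\end{equation*}
This yields $d_0\le\delta$, and combining with $d_1+2d_2\le 2k$ (hence $d_1+d_2\le 2k$) gives $d=d_0+d_1+d_2\le\delta+2k$, which is exactly the claimed bound.

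The main obstacle is making sure the double-counting is tight and that the Roman condition is applied correctly: I must verify that each $i\in I_0$ genuinely forces at least $k$ vertices of $N(v)$ to carry label $2$ (this is the definition of an RkDF applied to the label-$0$ vertex $v$), and that no weight contribution is double-counted when passing from $I_0$ to the full index set. One subtlety is whether the bound $d_1+d_2\le 2k$ is the right companion inequality; since $d_1+2d_2\le 2k$ already implies $d_1+d_2\le 2k$, the weaker form suffices, so there is slack here and no difficulty arises.

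For sharpness I would exhibit a graph attaining equality, the natural candidate being a complete graph. By Theorem \ref{Knk}, $d_R^k(K_n)=n$ when $n\ge 2k$; since $\delta(K_n)=n-1$, the bound reads $d_R^k(K_n)\le (n-1)+2k$, and one checks that taking $n=2k$ gives $\delta+2k=(2k-1)+2k=4k-1$ against the true value $2k$, so $K_n$ near $n=2k$ does not meet the bound. Instead I expect the extremal examples to come from graphs where $\gamma_{kR}(G)=2k$ and $d_R^k(G)=n$ with $\delta=n-2k$, or more simply from small graphs; I would search among complete graphs and complete bipartite graphs using Corollary \ref{Kpq} and the explicit construction following it to locate a case where $d_R^k(G)=\delta(G)+2k$ holds with equality.
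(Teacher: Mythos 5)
Your proof of the inequality $d_R^k(G)\le \delta(G)+2k$ is correct, and it is at heart the same double count that the paper performs over the neighborhood of a minimum-degree vertex, just organized differently: the paper sorts the indices according to whether $\sum_{u\in N[v]}f_i(u)=1$ (at most $2k$ indices, after which the remaining ones are forced to have $f_i(v)=0$ and hence $\sum_{u\in N[v]}f_i(u)\ge 2k$), while you partition by the value $f_i(v)$, prove $d_0\le\delta$ using only the open neighborhood, and finish with $d=d_0+d_1+d_2\le\delta+(d_1+2d_2)\le\delta+2k$. Your bookkeeping is airtight and arguably cleaner, since it needs no case distinction on how many indices realize a borderline configuration.

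The genuine gap is the sharpness claim, which is part of the theorem and which you do not prove: you only sketch a search, and the search is aimed at families of graphs that provably cannot attain the bound. Complete graphs never do: $d_R^k(K_n)=n<(n-1)+2k$ when $n\ge 2k$ by Theorem \ref{Knk}, and $d_R^k(K_n)\le 2k-1<\delta(K_n)+2k$ otherwise. Complete bipartite graphs fail as well: the construction following Corollary \ref{Kpq} yields $d_R^k(K_{tk,tk})=tk=\delta(K_{tk,tk})$, which falls short by $2k$; and at the most promising candidate $K_{k,2k}$, where the bound of Corollary \ref{Kpq} does equal $\delta+2k=3k$, one in fact has $d_R^k(K_{k,2k})<3k$, because equality would force, via Theorem \ref{gammast}, a family of $3k$ \emph{distinct} Roman $k$-dominating functions each of weight $\gamma_{kR}(K_{k,2k})=2k$, whereas $K_{k,2k}$ admits exactly one such function (label the side of size $k$ with $2$ and the side of size $2k$ with $0$). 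The paper instead builds a bespoke extremal graph: take $k$ disjoint copies of $K_{k^3+(2k+1)k}$ and join a new vertex $v$ to $k$ vertices of each copy, so that $\delta(G)=\deg(v)=k^2$; it then writes down an explicit Roman $(k,k)$-dominating family of $k^2+2k$ functions ($k^2$ functions assigning $0$ to $v$ and placing the $2$'s on disjoint blocks inside the cliques, plus $2k$ functions assigning $1$ to $v$). Without some construction of this kind, you have established the inequality but not the theorem.
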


\begin{proof}
If $d_{R}^k(G)\le 2k$, the result is immediate. Let now $d_{R}^k(G)\ge 2k+1$
and let $\{f_1, f_2,\ldots,f_d\}$ be an R$(k,k)$D family on $G$ such that $d
= d_{R}^k(G)$. Assume that $v$ is a vertex of minimum degree $\delta(G)$.
Clearly the equality $\sum_{u\in N[v]}f_i(u)=1$ holds for at most $2k$
indices $i\in \{1,2,\ldots,d\}$, say $i=1,\ldots,2k$, if any. In this case $%
f_i(v)=1$ and $f_i(u)=0$ for each $u\in N(v)$ and $i=1,\ldots,2k$. It
follows that for $2k+1\le i\le d$, $f_i(v)=0$ and thus $\sum_{u\in
N[v]}f_i(u)\ge 2k$ for $2k+1\le i\le d$. Altogether we obtain
\begin{eqnarray*}
2k(d-2k)+2k & \le & \sum_{i=1}^d\sum_{u\in N[v]}f_i(u) \\
& = & \sum_{u\in N[v]}\sum_{i=1}^df_i(u) \\
& \le & \sum_{u\in N[v]} 2k \\
& = & 2k(\delta(G)+1)
\end{eqnarray*}
This inequality chain leads to the desired bound.\newline

To prove the sharpness of this inequality, let $G_i$ be a copy of $K_{k^3+(2k+1)k}$ with vertex set
$V(G_i)=\{v^i_1,v^i_2,\ldots,v^i_{k^3+(2k+1)k}\}$ for $1\le i\le k$ and let
the graph $G$ be obtained from $\cup_{i=1}^kG_i$ by adding a new vertex $v$
and joining $v$ to each $v_1^i,\ldots,v_k^i$. Define the Roman $k$%
-dominating functions $f_i^s,h_l$ for $1\le i\le k, 0\le s\le k-1$ and $1\le
l\le 2k$ as follows:
\begin{equation*}
f_i^s(v_1^i)=\cdots=f_i^s(v_k^i)=2,\;f_i^s(v_{(i-1)k^2+(s+1)k+1}^j)=%
\cdots=f_i^s(v_{(i-1)k^2+(s+1)k+k}^j)=2
\end{equation*}
\begin{equation*}
\;\mathrm{if}\; j\in \{1,2,\ldots,k\}-\{i\} \; \mathrm{and}\;f_i^s(x)=0\;\;
\mathrm{otherwise}
\end{equation*}
and for $1\le l\le 2k$,
\begin{equation*}
h_{l}(v)=1, h_{l}(v^i_{k^3+lk+1})=\ldots=h_{l}(v^i_{k^3+lk+k})=2\;(1\le i\le
k),\;\mathrm{and}\; h_{l}(x)=0\;\mathrm{otherwise}.
\end{equation*}

It is easy to see that $f_i^s$ and $g_l$ are Roman $k$-dominating function
on $G$ for each $1\le i\le k,0\le s\le k-1,1\le l\le 2k$ and $%
\{f_i^s,g_l\mid 1\le i\le k, 0\le s\le k-1 \;\mathrm{and}\;1\le l\le 2k\}$
is a Roman $(k,k)$-dominating family on $G$. Since $\delta(G)=k^2$, we have $%
d_R^k(G)=\delta(G)+2k$.
\end{proof}


For regular graphs the following improvement of Theorem \ref{kdelta} is
valid.

\begin{theorem}\label{reg}
\label{kreg} \emph{Let $k$ be a positive integer. If $G$ is a $\delta(G)$%
-regular graph, then
\begin{equation*}
d_{R}^k(G)\le\max\{2k-1,\delta(G)+k\}\le \delta(G)+2k-1.
\end{equation*}
}
\end{theorem}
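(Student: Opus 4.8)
The plan is to replace the local, single-neighbourhood count behind Theorem \ref{kdelta} by a \emph{global} double count over all vertices and all functions, which is what extracts the extra saving from regularity. Write $\{f_1,\dots,f_d\}$ for an R$(k,k)$D family with $d=d_R^k(G)$, and for each $i$ let $(V_0^{f_i},V_1^{f_i},V_2^{f_i})$ be the associated partition. Set $P=\sum_{i=1}^d|V_0^{f_i}|$, $Q=\sum_{i=1}^d|V_1^{f_i}|$ and $R=\sum_{i=1}^d|V_2^{f_i}|$, so that $P+Q+R=dn$. The goal is to show $d\le\max\{2k-1,\delta+k\}$; equivalently, that $d\le\delta+k$ whenever $\delta\ge k$ and $d\le 2k-1$ whenever $\delta\le k-1$.

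Two inequalities drive everything. First, summing the defining constraint $\sum_i f_i(v)\le 2k$ over all $v\in V$ and recognising $\sum_{v}f_i(v)=\omega(f_i)=|V_1^{f_i}|+2|V_2^{f_i}|$ yields the budget inequality $Q+2R\le 2kn$. Second, for each $i$ I would count the edges between $V_0^{f_i}$ and $V_2^{f_i}$ in two ways: every vertex of $V_0^{f_i}$ has at least $k$ neighbours in $V_2^{f_i}$, giving at least $k|V_0^{f_i}|$ such edges, while each vertex of $V_2^{f_i}$ contributes at most its degree $\delta$. Here regularity is essential, since it turns $\sum_{w\in V_2^{f_i}}\deg(w)$ into exactly $\delta|V_2^{f_i}|$. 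Summing over $i$ gives $kP\le\delta R$.

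Combining these, $dn=P+Q+R\le\frac{\delta}{k}R+(2kn-2R)+R=2kn+\frac{\delta-k}{k}R$. When $\delta\ge k$ the coefficient of $R$ is nonnegative, so I would use $R\le kn$ (immediate from $2R\le Q+2R\le 2kn$) to conclude $dn\le(\delta+k)n$, i.e. $d\le\delta+k$. When $\delta\le k-1$ the argument is different and in fact simpler: since $G$ is $\delta$-regular every vertex has fewer than $k$ neighbours, so no vertex can carry label $0$ in any Roman $k$-dominating function, forcing each $f_i:V\to\{1,2\}$; then $d\le\sum_i f_i(v)\le 2k$ for any $v$, and $d=2k$ would force $f_i(v)=1$ for every $i$ and every $v$, making all $f_i$ equal and contradicting distinctness, hence $d\le 2k-1$.

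Together these two cases give $d_R^k(G)\le\max\{2k-1,\delta+k\}$, and the final inequality $\max\{2k-1,\delta+k\}\le\delta+2k-1$ is routine, since $2k-1\le\delta+2k-1$ and, because $k\ge 1$, also $\delta+k\le\delta+2k-1$. The main conceptual obstacle is recognising that the neighbourhood-local estimate of Theorem \ref{kdelta} caps out at $\delta+2k$ and cannot be pushed further, so the improvement must come instead from the global edge count between the zero-set and the two-set of each function, where regularity is used in an essential way; the secondary point needing care is the low-degree regime $\delta\le k-1$, which is not covered by that count and must be dispatched by the elementary ``no zeros'' observation.
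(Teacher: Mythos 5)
Your proof is correct, and it differs from the paper's in an instructive way. The case decomposition is identical: for $\delta(G)\le k-1$ you observe that no vertex can receive label $0$ (every vertex has fewer than $k$ neighbours), so each $f_i$ maps into $\{1,2\}$, whence $d\le 2k$, and $d=2k$ would force every $f_i\equiv 1$, contradicting distinctness; this is exactly the content and proof of the paper's Observation \ref{obs}, if anything stated more explicitly. The difference is in the main case $\delta(G)\ge k$. There the paper simply cites Corollary \ref{Delta1}, $d_{R}^k(G)\le\max\{\Delta,k-1\}+k$, which is itself obtained by composing the K\"ammerling--Volkmann lower bound $\gamma_{kR}(G)\ge 2kn/(\Delta+k)$ for $\Delta\ge k$ (Proposition \ref{Delta}, proved in \cite{KV}, not in this paper) with $\gamma_{kR}(G)\cdot d_{R}^k(G)\le 2kn$ (Theorem \ref{gammast}). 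You instead prove the needed bound from scratch: your inequality $kP\le\delta R$ is precisely the edge count behind Proposition \ref{Delta}, between $V_0^{f_i}$ and $V_2^{f_i}$, but applied to every function of the family and summed, and your budget inequality $Q+2R\le 2kn$ is the computation inside the proof of Theorem \ref{gammast}. So the ingredients coincide, but your argument is self-contained where the paper's is citational, and it obtains the bound in one aggregated count rather than via a lower bound on $\gamma_{kR}$. One correction of emphasis: regularity is not where you say it is. Your edge count works in any graph, since $\deg(w)\le\Delta$ gives $kP\le\Delta R$ and hence $d\le\Delta+k$ whenever $\Delta\ge k$ --- that is, you have re-derived Corollary \ref{Delta1} for all graphs; regularity enters only at the very end, to replace $\Delta$ by $\delta$ (and, in the low-degree case, to infer $\Delta\le k-1$ from $\delta\le k-1$), which is exactly the only way the paper uses it as well.
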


\begin{proof}
If $k>\Delta(G)=\delta(G)$ then by Observation \ref{obs}, $d_{R}^k(G)\le 2k-1
$ and the desired bound is proved. If $k\le\Delta(G)$, then it follows from
Corollary \ref{Delta1} that
\begin{equation*}
d_{R}^k(G)\le \delta(G)+k,
\end{equation*}
and the proof is complete.
\end{proof}

As an application of Theorems \ref{kdelta} and \ref{reg}, we
will prove the following Nordhaus-Gaddum type result.

\begin{theorem}
\label{knord} \emph{Let $k\ge 1$ be an integer. If $G$ is a graph of order $n
$, then
\begin{equation}
d_{R}^k(G)+d_{R}^k(\overline{G})\le n+4k-2,
\end{equation}
with equality only for graphs with $\Delta(G)-\delta(G)=1$. }
\end{theorem}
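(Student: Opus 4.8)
The plan is to bound $d_R^k(G)+d_R^k(\overline{G})$ by applying the degree bound from Theorem \ref{kdelta} to both $G$ and its complement, and then to exploit the fundamental relation between the minimum degrees of a graph and its complement. Recall that for every graph $G$ of order $n$ we have $\delta(\overline{G})=n-1-\Delta(G)$ and $\Delta(\overline{G})=n-1-\delta(G)$, so in particular $\delta(G)+\delta(\overline{G})=n-1-(\Delta(G)-\delta(G))\le n-1$. Theorem \ref{kdelta} gives $d_R^k(G)\le\delta(G)+2k$ and $d_R^k(\overline{G})\le\delta(\overline{G})+2k$; adding these and substituting the degree identity yields
\begin{equation*}
d_R^k(G)+d_R^k(\overline{G})\le\delta(G)+\delta(\overline{G})+4k=n-1-(\Delta(G)-\delta(G))+4k\le n+4k-1.
\end{equation*}
This is one short of the claimed bound $n+4k-2$, so the crude application of Theorem \ref{kdelta} does not by itself suffice; I will need to shave off one more unit, and this is where the regularity improvement in Theorem \ref{reg} enters.

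The key idea is to split into cases according to the value of $\Delta(G)-\delta(G)$. When $\Delta(G)-\delta(G)\ge 1$, the displayed chain already gives $d_R^k(G)+d_R^k(\overline{G})\le n+4k-1-(\Delta(G)-\delta(G))\le n+4k-2$, so the bound holds, and moreover equality forces $\Delta(G)-\delta(G)=1$, which already accounts for the ``equality only for'' clause in that case. The remaining case is the regular graphs, where $\Delta(G)-\delta(G)=0$; here $G$ is $\delta(G)$-regular and $\overline{G}$ is $\delta(\overline{G})$-regular with $\delta(G)+\delta(\overline{G})=n-1$. Now I apply the sharper Theorem \ref{reg} to each: $d_R^k(G)\le\delta(G)+2k-1$ and $d_R^k(\overline{G})\le\delta(\overline{G})+2k-1$, and summing gives
\begin{equation*}
d_R^k(G)+d_R^k(\overline{G})\le\delta(G)+\delta(\overline{G})+4k-2=(n-1)+4k-2=n+4k-3,
\end{equation*}
which is strictly below $n+4k-2$. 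Hence for regular graphs the inequality is strict, confirming that equality in the main bound can occur only when $G$ is non-regular, and combined with the first case only when $\Delta(G)-\delta(G)=1$.

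The main obstacle is being careful with the arithmetic of the two regimes and making sure the ``equality only for $\Delta(G)-\delta(G)=1$'' conclusion is justified in both branches simultaneously: the non-regular branch yields equality precisely when $\Delta(G)-\delta(G)=1$ and both $G,\overline{G}$ meet the degree bound of Theorem \ref{kdelta} with equality, while the regular branch never attains equality since it lands at $n+4k-3$. One subtlety to verify is that Theorem \ref{reg} indeed applies to $\overline{G}$ when $G$ is regular (it is, since the complement of a regular graph is regular), so invoking the improved bound for both graphs is legitimate. I would also note that the statement claims equality is possible ``only for'' such graphs rather than asserting it is always achieved, so I need only establish the implication equality $\Rightarrow\Delta(G)-\delta(G)=1$, not construct extremal examples, which keeps the proof short and avoids any delicate sharpness analysis.
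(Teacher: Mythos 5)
Your proof is correct and follows essentially the same route as the paper: apply Theorem \ref{kdelta} to both $G$ and $\overline{G}$, use $\delta(G)+\delta(\overline{G})=n-1-(\Delta(G)-\delta(G))$ to settle the non-regular case (and the equality clause), and invoke the regular-graph improvement of Theorem \ref{reg} for both $G$ and $\overline{G}$ to get the strictly smaller bound $n+4k-3$ in the regular case. No gaps; the only difference is that you spell out explicitly the equality analysis and the fact that $\overline{G}$ inherits regularity, which the paper leaves implicit.
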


\begin{proof}
It follows from Theorem \ref{kdelta} that
$$d_{R}^k(G)+d_{R}^k(\overline{G})\le (\delta(G)+2k)+(\delta(\overline{G})+2k)=
(\delta(G)+2k)+(n-\Delta(G)-1+2k).$$
If $G$ is not regular, then $\Delta(G)-\delta(G)\ge 1$, and hence this inequality implies the desired bound
$d_{R}^k(G)+d_{R}^k(\overline{G})\le n+4k-2$. If $G$ is $\delta(G)$-regular, then we deduce from Theorem \ref{reg} that
$$d_{R}^k(G)+d_{R}^k(\overline{G})\le (\delta(G)+2k-1)+(\delta(\overline{G})+2k-1)=
n+4k-3,$$
and the proof of the Nordhaus-Gaddum bound (6) is complete.
\end{proof}

\begin{corollary}
\emph{(\cite{SV}) For every graph $G$ of order $n$,
\begin{equation*}
d_{R}(G)+d_{R}(\overline{G})\le n+2,
\end{equation*}
with equality only for graphs with $\Delta(G)=\delta(G)+1$. }
\end{corollary}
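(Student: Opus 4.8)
The plan is to obtain this corollary directly as the special case $k=1$ of Theorem \ref{knord}, so that no fresh argument is required beyond a notational translation. Recall from the introduction that the Roman $(1,1)$-domatic number coincides with the usual Roman domatic number, that is, $d_R^1(G)=d_R(G)$ and likewise $d_R^1(\overline{G})=d_R(\overline{G})$. Substituting $k=1$ into the bound supplied by Theorem \ref{knord}, I would compute
\begin{equation*}
d_R(G)+d_R(\overline{G})=d_R^1(G)+d_R^1(\overline{G})\le n+4\cdot 1-2=n+2,
\end{equation*}
which is precisely the claimed inequality.

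For the equality clause, I would observe that the condition $\Delta(G)-\delta(G)=1$ appearing in Theorem \ref{knord} is merely a rewriting of $\Delta(G)=\delta(G)+1$, so the equality characterization transfers verbatim: equality holds only for graphs with $\Delta(G)=\delta(G)+1$. I do not expect any genuine obstacle, since the entire content of the corollary is already contained in Theorem \ref{knord}; the only things to verify are that the parameter $4k-2$ evaluates to $2$ at $k=1$, yielding the stated bound $n+2$, and that the extremal condition $\Delta-\delta=1$ is unchanged under this specialization, yielding the stated equality condition. This recovers the earlier result of \cite{SV}.
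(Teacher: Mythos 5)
Your proposal is correct and matches the paper's own (implicit) derivation exactly: the corollary is stated immediately after Theorem \ref{knord} precisely as its $k=1$ specialization, with $d_R^1=d_R$, $n+4k-2$ evaluating to $n+2$, and the equality condition $\Delta(G)-\delta(G)=1$ being the same as $\Delta(G)=\delta(G)+1$. No further argument is needed.
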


For regular graphs we prove the following Nordhaus-Gaddum inequality.

\begin{theorem}
\label{regnord} \emph{Let $k\ge 1$ be an integer. If $G$ is a $\delta$-regular graph of order $n$, then
\begin{equation}
d_{R}^k(G)+d_{R}^k(\overline{G})\le\max\{4k-2,n+2k-1,n+3k-2-\delta,3k+\delta-1\}.
\end{equation}}
\end{theorem}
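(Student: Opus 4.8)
The plan is to derive the bound by applying the regular-graph estimate of Theorem \ref{reg} to $G$ and to $\overline{G}$ simultaneously, and then merging the two resulting maxima into one. First I would record that a $\delta$-regular graph of order $n$ has complement $\overline{G}$ which is itself regular, of degree $\delta(\overline{G})=n-1-\delta$. Since both $G$ and $\overline{G}$ are regular, Theorem \ref{reg} applies to each of them, giving
\[
d_{R}^k(G) \le \max\{2k-1,\ \delta + k\}, \qquad d_{R}^k(\overline{G}) \le \max\{2k-1,\ n-1-\delta + k\}.
\]

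Next I would add these two inequalities. The elementary identity that drives the merge is that for any reals $\max\{a,b\}+\max\{c,d\} = \max\{a+c,\ a+d,\ b+c,\ b+d\}$, so taking $a=c=2k-1$, $b=\delta+k$ and $d=n-1-\delta+k$ yields
\[
d_{R}^k(G)+d_{R}^k(\overline{G}) \le \max\{(2k-1)+(2k-1),\ (2k-1)+(n-1-\delta+k),\ (\delta+k)+(2k-1),\ (\delta+k)+(n-1-\delta+k)\}.
\]
Simplifying the four entries turns them into $4k-2$, $n+3k-2-\delta$, $3k+\delta-1$ and $n+2k-1$ respectively, and reordering these four values reproduces exactly the claimed right-hand side $\max\{4k-2,\ n+2k-1,\ n+3k-2-\delta,\ 3k+\delta-1\}$.

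I do not expect a genuine obstacle here, since the whole argument is a direct combination of Theorem \ref{reg} for the two complementary regular graphs. The only points requiring care are recording the complement's regular degree correctly as $n-1-\delta$ and verifying that the four pairwise sums of the two maxima match, term by term, the four values inside the stated maximum.
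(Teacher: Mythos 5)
Your proof is correct and is essentially the paper's own argument in compressed form: the paper splits into four cases according to whether $k\le\delta$ and whether $k\le\delta(\overline{G})=n-1-\delta$, applying Observation \ref{obs} or Corollary \ref{Delta1} to each of $G$ and $\overline{G}$ in each case, and those four cases are exactly the four pairwise sums produced by your identity $\max\{a,b\}+\max\{c,d\}=\max\{a+c,a+d,b+c,b+d\}$. Invoking Theorem \ref{reg} for both $G$ and $\overline{G}$ (both regular, the complement having degree $n-1-\delta$) and then merging the two maxima is a clean, equivalent packaging of the same underlying bounds, with no gap.
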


\begin{proof}
Let $\delta(G)=\delta$ and $\delta(\overline G)=\overline\delta$. We distinguish four cases.

If $k\ge\delta+1$ and $k\ge\overline\delta+1$, then it follows from Observation \ref{obs} that
$$d_{R}^k(G)+d_{R}^k(\overline{G})\le (2k-1)+(2k-1)=4k-2.$$

If $k\le\delta$ and $k\le\overline\delta$, then Corollary \ref{Delta1} implies that
$$d_{R}^k(G)+d_{R}^k(\overline{G})\le(\delta+k)+(\overline\delta+k)=\delta+2k+n-1-\delta=n+2k-1.$$

If $k\ge\delta+1$ and $k\le\overline\delta$, then we deduce from Observation \ref{obs} and
Corollary \ref{Delta1} that
$$d_{R}^k(G)+d_{R}^k(\overline{G})\le(2k-1)+(\overline\delta+k)=3k-1+n-1-\delta=n+3k-2-\delta.$$

If $k\le\delta$ and $k\ge\overline\delta+1$, then Observation \ref{obs} and Corollary \ref{Delta1}
lead to
$$d_{R}^k(G)+d_{R}^k(\overline{G})\le(\delta+k)+(2k-1)=3k+\delta-1.$$
This completes the proof.
\end{proof}

If $G$ is a $\delta$-regular graph of order $n\ge 2$, then Theorem \ref{regnord} leads to the
following improvement of Theorem \ref{knord} for $k\ge 2$.

\begin{corollary}
\emph{Let $k\ge 2$ be an integer. If $G$ is a $\delta$-regular graph of order $n\ge 2$, then
$$d_{R}^k(G)+d_{R}^k(\overline{G})\le n+4k-4.$$}
\end{corollary}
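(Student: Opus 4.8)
The plan is to derive the corollary directly from Theorem~\ref{regnord} by bounding each of the four terms in the maximum by $n+4k-4$ under the hypothesis $k\ge 2$. Since $G$ is $\delta$-regular of order $n$, we have $\overline{\delta}=n-1-\delta$, and both $\delta$ and $\overline{\delta}$ are nonnegative integers satisfying $0\le\delta\le n-1$. First I would record the four quantities appearing in the bound of Theorem~\ref{regnord}, namely $4k-2$, $n+2k-1$, $n+3k-2-\delta$, and $3k+\delta-1$, and show that each is at most $n+4k-4$.

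The term $4k-2$ is bounded by $n+4k-4$ precisely when $n\ge 2$, which holds by hypothesis. For $n+2k-1$, I would observe that $n+2k-1\le n+4k-4$ is equivalent to $2k\ge 3$, i.e.\ $k\ge 2$, which again is exactly the hypothesis. For the remaining two terms I would use that $G$ is a genuine graph so that $0\le\delta\le n-1$: the term $3k+\delta-1$ satisfies $3k+\delta-1\le 3k+(n-1)-1=n+3k-2\le n+4k-4$ since $k\ge 2$; and the term $n+3k-2-\delta$ satisfies $n+3k-2-\delta\le n+3k-2\le n+4k-4$ using $\delta\ge 0$ and $k\ge 2$. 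Taking the maximum of the four bounds then gives $d_R^k(G)+d_R^k(\overline{G})\le n+4k-4$, as desired.

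The only subtlety to watch is which case actually realizes the maximum, but since the corollary only asserts the upper bound $n+4k-4$, it suffices to check every term individually rather than to identify the dominant one; the argument is therefore a short case-by-case arithmetic verification. I do not anticipate any genuine obstacle here: each inequality reduces to either $n\ge 2$, $k\ge 2$, or $0\le\delta\le n-1$, all of which are available from the hypotheses and from $G$ being a simple graph. The main point to state carefully is the substitution $\overline{\delta}=n-1-\delta$, which is implicitly what lets Theorem~\ref{regnord} be written purely in terms of $n$, $k$, and $\delta$, and which I would mention explicitly at the outset so that the four terms are unambiguous functions of those parameters.
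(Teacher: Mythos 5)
Your proposal is correct and is exactly the argument the paper intends: the paper states this corollary as an immediate consequence of Theorem~\ref{regnord}, and your term-by-term verification (each of $4k-2$, $n+2k-1$, $n+3k-2-\delta$, $3k+\delta-1$ is at most $n+4k-4$ using $n\ge 2$, $k\ge 2$, and $0\le\delta\le n-1$ with $\overline{\delta}=n-1-\delta$) simply fills in the elementary arithmetic the paper leaves implicit.
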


\end{document}